\newcommand{\url}[1]{#1} 
\definecolor{gray}{rgb}{0.2,0.2,.2}
\definecolor{colorBlue}{rgb}{0.,.0,0.75}
\definecolor{colorGreen}{rgb}{0.,0.65,.0}
\definecolor{colorRed}{rgb}{0.99,0.,.0}
\newcommand{\BMHC}{}
\newcommand{\EMHC}{}
\newcommand{\BMHR}{}
\newcommand{\EMHR}{}
\newcommand{\iu}{\mathtt{i}}
\newcommand{\fspace}[1]{{\mathsf{#1}}}
\newcommand{\fspaceL}{\fspace{L}}
\newcommand{\ol}[1]{{\overline{#1}}}
\newcommand{\Rset}{{\mathbb{R}}}
\newcommand{\Nset}{{\mathbb{N}}}
\newcommand{\ocinterval}[2]{(#1,\,#2]}%
\newcommand{\cointerval}[2]{[#1,\,#2)}%
\newcommand{\oointerval}[2]{(#1,\,#2)}%
\newcommand{\ccinterval}[2]{[#1,\,#2]}%
\newcommand{\even}{{\rm \,even}}
\newcommand{\tdots}{{...}}%
\newcommand{\supp}{{\rm supp}}
\newcommand{\pair}[2]{{\left({#1},\,{#2}\right)}}
\newcommand{\bskp}[2]{{\big\langle{#1},\,{#2}\big\rangle}}
\newcommand{\bpair}[2]{{\big({#1},\,{#2}\big)}}
\newcommand{\at}[1]{{\left({#1}\right)}}
\newcommand{\nat}[1]{(#1)}
\newcommand{\bat}[1]{{\big(#1\big)}}
\newcommand{\Bat}[1]{{\Big(#1\Big)}}
\newcommand{\ul}[1]{\underline{#1}}
\newcommand{\D}{\displaystyle}
\newcommand{\norm}[1]{\left\|{#1}\right\|}
\newcommand{\nnorm}[1]{\|{#1}\|}
\newcommand{\bnorm}[1]{\big\|{#1}\big\|}
\newcommand{\abs}[1]{\left|{#1}\right|}
\newcommand{\babs}[1]{\big|{#1}\big|}
\newcommand{\dint}[1]{\,\mathrm{d}#1}
\newcommand{\Om}{{\Omega}}
\newcommand{\al}{{\alpha}}
\newcommand{\be}{{\beta}}
\newcommand{\eps}{{\varepsilon}}
\newcommand{\la}{{\lambda}}
\newcommand{\si}{{\sigma}}
\newcommand{\om}{{\omega}}
\newcommand{\calA}{\mathcal{A}}
\newcommand{\calB}{\mathcal{B}}
\newcommand{\calC}{\mathcal{C}}
\newcommand{\calD}{\mathcal{D}}
\newcommand{\calF}{\mathcal{F}}
\newcommand{\calG}{\mathcal{G}}
\newcommand{\calK}{\mathcal{K}}
\newcommand{\calL}{\mathcal{L}}
\newcommand{\calM}{\mathcal{M}}
\newcommand{\calN}{\mathcal{N}}
\newcommand{\calP}{\mathcal{P}}
\newcommand{\calQ}{\mathcal{Q}}
\newcommand{\calT}{\mathcal{T}}
\theoremstyle{plain}
\newtheorem{theorem}             {Theorem}[]
\newtheorem{corollary}  [theorem]{Corollary}
\newtheorem{lemma}      [theorem]{Lemma}
\newtheorem{proposition}[theorem]{Proposition}
\newtheorem*{result*}{Main result}
\newtheorem{assumption} [theorem]{Assumption}
\begin{document}


\title{Solitary waves in atomic chains
and peridynamical media}

\date{\today}

\author{%
Michael Herrmann\footnote{Technische Universit\"at Braunschweig, Computational Mathematics, {\tt michael.herrmann@tu-braunscheig.de}}
\and
Karsten Matthies\footnote{ University of Bath, Department of Mathematical Sciences, {\tt k.matthies@bath.ac.uk}}
 }

\maketitle


\begin{abstract}
Peridynamics describes the nonlinear interactions in spatially
extended Hamiltonian systems by nonlocal integro-differential equations, which can be regarded as the natural generalization of lattice models. We prove the existence of solitary traveling waves for super-quadratic potentials by maximizing the potential energy
subject to both a norm and a shape constraint. We also discuss the numerical computation of waves and study several asymptotic regimes.
\end{abstract}

%
 \quad\newline\noindent%
 \begin{minipage}[t]{0.15\textwidth}%
   Keywords:
 \end{minipage}%
 \begin{minipage}[t]{0.8\textwidth}%
  \emph{peridynamics}, \emph{Hamiltonian lattices}, \emph{solitary waves},\\  \emph{variational methods}, \emph{asymptotic analysis}
 \end{minipage}%
 \medskip
 \newline\noindent
 \begin{minipage}[t]{0.15\textwidth}%
   MSC (2010): %
 \end{minipage}%
37K40,  
37K60,  
47J30, 
74J30, 
74H15  	
 \begin{minipage}[t]{0.8\textwidth}%
 \end{minipage}%

%
\setcounter{tocdepth}{4}
\setcounter{secnumdepth}{3}{\scriptsize{\tableofcontents}}
%
%
%
\section{Introduction}
\label{sect:Intro}
%
%
Peridynamics is a modern branch of solid mechanics and materials science, which models the physical interactions in a material continuum not by partial differential equations but in  terms of nonlocal integro-differential equations, see \cite{Sil00,SL10} for an introduction and overview. The simplest dyna\-mical model for a spatially one-dimensional and infinitely extended medium is
\begin{align}
\label{Eqn:DynamicalSystem}
\partial_t^2 u\pair{t}{y}=\int\limits_{-\infty}^{+\infty} f\bpair{u\pair{t}{y+\xi}-u\bpair{t}{y}}{\xi}\dint\xi
\end{align}
with time $t>0$, material space coordinate $y\in\Rset$, bond variable \BMHR $\xi\in\Rset$, \EMHR and scalar displacement field $u$. The elastic force function $f$ is usually supposed to satisfy Newton's third law of motion via
\begin{align}
\label{Eqn:Newton3}
f\pair{r}{\xi}=-f\pair{-r}{-\xi}
\end{align}
so that \eqref{Eqn:DynamicalSystem} can equivalently be written as
\begin{align}
\label{Eqn:DynamicalSystem2}
\partial_t^2 u\pair{t}{y}=\int\limits_0^{\infty}f\bpair{u\pair{t}{y+\xi}-u\bpair{t}{y}}{\xi}-
f\bpair{u\pair{t}{y}-u\bpair{t}{y-\xi}}{\xi}\dint\xi\,.
\end{align}
The symmetry condition \eqref{Eqn:Newton3} further ensures that \eqref{Eqn:DynamicalSystem2} admits both a Lagrangian and Hamiltonian structure and can hence be regarded as a nonlocal wave equation without dissipation, see also \S\ref{sect:AsympLin}. For energetic considerations it is also useful to write
\begin{align*}
f\pair{r}{\xi}=\partial_r \Psi\pair{r}{\xi}\,,
\end{align*}
where the micro-potential $\Psi\pair{r}{\xi}$ quantifies the contribution to the potential energy coming from the elastic deformation of the bond $\xi$. The corresponding energy density is given by
\begin{align*}
e\pair{t}{y}=\tfrac12 \bat{\partial_t u\pair{t}{y}}^2+\int\limits_0^\infty
\Psi\pair{u\pair{t}{y+\xi}-u\pair{t}{y}}{\xi}\dint\xi
\end{align*}
and the total energy is conserved according to $\frac{\dint}{\dint{t}}\int_{-\infty}^{+\infty} e \pair{t}{y}\dint{y}=0$.
%
%
\paragraph{Peridynamics and lattice models}
%
%
In this paper we focus on two particular settings. In the simplified \emph{continuous-coupling case} we have
\begin{align}
\label{Case:ContinousCoupling}
\Psi\pair{r}{\xi}=\al\at\xi \Phi\bat{\beta\at\xi r}
\end{align}
with smooth reference potential $\Phi:\Rset\to\Rset$ and scaling coefficients provided by
two sufficiently regular functions  $\al,\beta:\Rset_+\to \Rset_+$. This is a typical choice in peridynamics and \cite{Sil16} proposes for instance
\begin{align}
\label{Eqn.Silling}
a\at{\xi}=\chi_{\ccinterval{0}{H}}\at{\xi}\xi\,,\qquad
\beta\at{\xi}=\chi_{\ccinterval{0}{H}}\at{\xi}\xi^{-1}\,,\qquad \Phi\at{r}=
\left\{\begin{array}{lcl}
c_2 r^2+c_3r^3 &&\text{for $r\leq0$,}\\
c_2 r^2&&\text{for $r>0$,}\\
\end{array}
\right.
\end{align}
where $H$ is the \emph{horizon} and $\chi_J$ is shorthand for the indicator function of the interval $J$.
\par
The \emph{discrete-coupling case} corresponds to
\begin{align}
\label{Case:DiscreteCoupling}
\Psi\pair{r}{\xi}=\sum_{m=1}^M \Phi_m\at{r}\delta_{\xi_m}\at\xi\,,
\end{align}
where
\begin{align*}
0<\xi_1<\tdots<\xi_M<\infty
\end{align*}
represent a finite number of active bonds and $\delta_{\xi_m}$ abbreviates a Dirac distribution centered at $\xi_m$. The integrals in the force term can hence be replaced by sums and the wave equation \eqref{Eqn:DynamicalSystem2} is actually a nonlocal lattice differential equation. For instance, for $M=1$ and $\xi_1=1$ we recover a variant of the classical Fermi-Pasta-Ulam-Tsingou chain (FPUT) while $M=2$ with \BMHC $\xi_1=1$ and  $\xi_2=2$ \EMHC describe an atomic chain with spring-like bonds between the nearest and the next-to-nearest neighbors.
\par
Of course, both the discrete-coupling case and the continuous-coupling case are closely related and \eqref{Eqn:DynamicalSystem2} can be viewed as a generalized lattice equation with a continuum of active bonds. Moreover, \eqref{Eqn:DynamicalSystem2}+\eqref{Case:DiscreteCoupling} with
\begin{align}
\label{Eqn:TwoSettings}
\Phi_m\at{x}=\al\at{\eps m}\Phi\bat{\beta\at{\eps m} r},\,\qquad M=\eps^{-2}
\end{align}
is a discretized version of \eqref{Eqn:DynamicalSystem2}+\eqref{Case:ContinousCoupling}, in which the integrals have been approximated by Riemann sums with grid size $\eps>0$ on the interval $\oointerval{0}{\eps^{-1}}$.
%
%
\paragraph{Traveling waves and eigenvalue problem for the wave profile}
%
%
A traveling wave is a special solution to \eqref{Eqn:DynamicalSystem2} that satisfies
\begin{align}
\label{Eqn:TWAnsatz}
u\pair{t}{y}=U\at{x}\,,\qquad x=y-\si t
\end{align}
with wave speed $\si$ and profile function $U$ depending on $x$, the \BMHC spatial \EMHC variable in  the comoving frame. \BMHC Combining \EMHC this ansatz with \eqref{Eqn:DynamicalSystem2}
we  obtain in the discrete-coupling case the advance-delay-differential equation
\begin{align}
\label{Eqn:TWLattice}
\si^2 U^{\prime\prime}\at{x}=\sum_{m=1}^M
\Phi_m^\prime\at{U\bat{x+\xi_m}-U\at{x}}-\Phi_m^\prime\at{U\bat{x}-U\at{x-\xi_m}}
\end{align}
and a similar formula with integrals over infinitely many shift terms can be derived in the continuous-coupling case. As explained below in greater detail, the existence of solution to \eqref{Eqn:TWLattice} with $M=1$ has been established by several authors using different methods but very little is known about the uniqueness and dynamical stability of lattice waves. For the peridynamical analogue with continuous coupling we are only \BMHC aware \EMHC of the rigorous existence result in \cite{PV18}  although there have been some attempts in the engineering community to construct traveling waves numerically or approximately, see \cite{DB06,Sil16}.
\par
For our purposes it is more convenient to reformulate the traveling wave equation as an integral equation for the \emph{wave profile}
\begin{align*}
W\at{x}:=U^\prime\at{x}\,,
\end{align*}
which provides the velocity component in a traveling wave via $\partial_t u\pair{t}{y}=\si W\at{x}$. To this end we write
\begin{align}
\label{Eqn:ToolA1}
f\pair{U\bat{x+\xi}-U\at{x}}{\xi}-f\pair{U\bat{x}-U\at{x-\xi}}{\xi}=\partial_x F\pair{x}{\xi}
\end{align}
with
\begin{align}
\label{Eqn:ToolA2}
F\pair{\cdot}{\xi}
:=
\calA_\xi \partial_r\Psi\pair{\calA_\xi W}{\xi}\,,
\end{align}
where the operator $\calA_\xi$ defined by
\begin{align}
\label{Eqn:ConvOperatorA}
\bat{\calA_\xi W}\at{x}:=\int\limits_{x-\xi/2}^{x+\xi/2} W\at{\tilde{x}}\dint{\tilde{x}}=\bat{\chi_\xi\ast W}\at{x}
\end{align}
is the $x$-convolution with the \BMHC indicator  \EMHC  function $\chi_\xi$ of the interval $\ccinterval{-\xi/2}{+\xi/2}$. Thanks to \eqref{Eqn:ToolA1}  and \eqref{Eqn:ToolA2} we infer from \eqref{Eqn:DynamicalSystem2} and \eqref{Eqn:TWAnsatz} that any traveling waves must satisfy the nonlinear eigenvalue problem
\begin{align}
\label{Eqn:TW1}
\si^2 W = \int\limits_0^\infty\calA_\xi\partial_r\Psi\pair{\calA_\xi W}{\xi}{\dint\xi}+\eta\,,
\end{align}
where $\eta$ is a constant of integration. The eigenvalue problem
\eqref{Eqn:TW1} is very useful for analytical investigations because the convolution operator \eqref{Eqn:ConvOperatorA} exhibits some nice invariance properties. Moreover, it also gives rise to an approximation scheme that can easily be implemented and shows --- at least in practice --- good convergence properties. Both issues will  be discussed below in greater detail \BMHR and many \EMHR of the key arguments have already been exploited in the context of FPUT chains, see \cite{FV99,Her10}.
\par
To simplify the presentation, we restrict our considerations in this paper to the special case
\begin{align}
\label{Eqn:TWX}
\eta=0
\end{align}
but emphasize that this condition can always be guaranteed by means of elementary transformations applied to the wave profile and the micro-potentials, see for instance \cite{Her10}. For solitary waves one can alternatively relate \BMHC $\eta$  \EMHC to waves that are homoclinic with respect to a non-vanishing asymptotic state $\lim_{x\to\pm\infty}W\at{x}\neq 0$.
\begin{figure}[t!]%
\centering{%
\includegraphics[width=.9\textwidth]{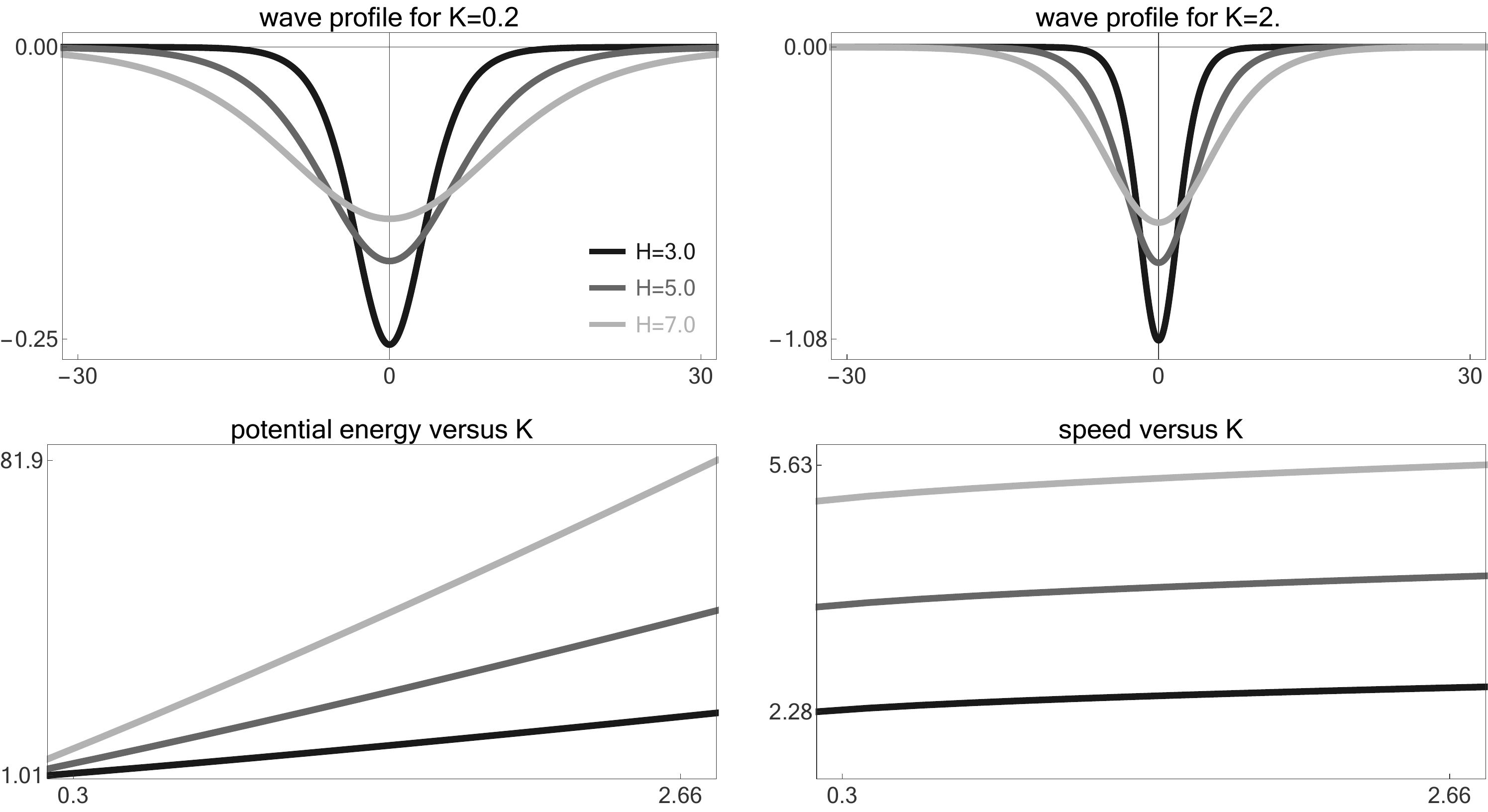}
}%
\caption{\emph{Top row}. Numerical simulations of wave profiles for  \eqref{Eqn.Silling} with $c_1=1/2$, $c_2=1/6$, and several choices of the coupling horizon $H$. The data are computed by an implementation of the improvement dynamics \eqref{Eqn:ImprovementDynamics} with negative initial profile,  sufficiently small discretization parameter $\eps$ and sufficiently large periodicity length $L$. Notice that the profile functions are non-positive as in \cite{Sil16} so that the analytical results in \S\ref{sect:Existence} hold with respect to the reflected reference potential.
\emph{Bottom row}. Potential energy $\calP\at{W}$ and wave speed $\si$ in dependence of the constraint value $K=\calK\at{W}$.
}%
\label{fig:silling}%
\end{figure}%
%
%
%
\paragraph{Variational setting for solitary and periodic waves}
%
%
In this paper we construct solitary waves for systems with convex and super-quadratic interaction potential but for completeness we also discuss the existence of periodic waves. Other types of traveling waves or potentials are also very important but necessitate a more sophisticated analysis and are beyond the scope of this paper. Examples are heteroclinic waves or phase transition waves with oscillatory tails in systems with double-well potential as studied in \cite{HR10,Her11,TV05,DB06,SZ12,HMSZ13} for atomic chains.
\par
For periodic waves we fix a length parameter $0<L<\infty$ and look for $2L$-periodic wave profiles that are square-integrable on the periodicity cell. This reads
\begin{align*}
W\in\fspaceL^2\bat{I_L}:=\Big\{W:\Rset\to\Rset\quad\text{with}\quad W\at{\cdot}=W\at{\cdot+2L}\quad\text{and}\quad\norm{W}_2<\infty\Big\}
\end{align*}
with
\begin{align*}
 \norm{W}_2^2:=
\int\limits_{I_L}W\at{x}^2\dint{x}\,,\qquad I_L:=\oointerval{-L}{+L}\,.
\end{align*}
For solitary waves we formally set $L=\infty$ and show in \S\ref{sect:ConvergencePeriodicWaves} that the corresponding waves can in fact be regarded as the limit of periodic waves as $L\to\infty$. \BMHR We also
mention that any $\calA_\xi$ from \eqref{Eqn:ConvOperatorA} is a well-defined, bounded, and symmetric operator on $\fspaceL^2\at{I_L}$, see Lemma \ref{Lem.PropsA} below. \EMHR
\par
Both for periodic and solitary waves we consider the potential energy functional
\begin{align*}
\calP\at{W}:=\int\limits_0^\infty \int\limits_{I_L}\Psi\bpair{\at{\calA_\xi W}\at{x}}{\xi}\dint{x}\dint\xi
\end{align*}
as well as the functional
\begin{align*}
\calK\at{W}:=\tfrac12 \norm{W}_2^2\,,
\end{align*}
where the latter quantifies after multiplication with $\si^2$ the integrated kinetic energy \BMHC of \EMHC a traveling wave. Using these energies we can reformulate the traveling wave equation \eqref{Eqn:TW1} with \eqref{Eqn:TWX} as
\begin{align}
\label{Eqn:TW2}
\si^2 \partial\calK\at{W} =\partial\calP\at{W}\,,
\end{align}
where $\partial$ denotes the G\^ateaux derivative with respect to $W$ and the inner product in $\fspaceL^2\at{I_L}$. In  particular, in the discrete-coupling case we have
\begin{align}
\label{Eqn:EnergyFormulas1}
\calP\at{W}=\sum_{m=1}^M\int\limits_{I_L}\Phi_m\bat{\calA_{\xi_m} W}\dint{x}\,,\qquad
\partial\calP\at{W}= \sum_{m=1}^M\calA_{\xi_m}\Phi^\prime_m\bat{\calA_{\xi_m} W}\,
\end{align}
while the continuous-coupling case corresponds to
\begin{align}
\label{Eqn:EnergyFormulas2}
\calP\at{W}=\int\limits_{0}^{\infty}\int\limits_{I_L}\al\at\xi\Phi\bat{\be\at\xi\calA_\xi W}\dint{x}\BMHC \dint\xi \EMHC\,,\qquad
\partial\calP\at{W}=\int\limits_{0}^{\infty}\al\at\xi
\be\at\xi\calA_\xi\BMHR \Phi^\prime\EMHR\bat{\be\at\xi \calA_\xi W}\dint\xi\,,
\end{align}
where we omitted the $x$-dependence of $W$ and $\calA_\xi W$ to ease the notation.
%
\paragraph{Existence result}

The existence of periodic and solitary traveling waves in FPUT chains
has been studied intensively over the last two decades and any of the proposed methods can also be applied to peridynamical media although the discussion of the technical details might be more involved.
\begin{enumerate}
\item %
The Mountain Pass Theorem allows to construct nontrivial critical points of the
Lagrangian action functional
\begin{align*}
W\mapsto\si^2\calK\at{W}-\calP\at{W}
\end{align*}
provided that the potential energy is super-quadratic and that the prescribed value of the wave speed $\si$ is sufficiently large. For details in the context of FPUT chains we refer to \cite{Pan05} and the references therein.
\item
Another variational setting minimizes $\calK$ subject to prescribed value of $\calP$, where $\si^{-2}$ plays the role of \BMHR a \EMHR \BMHC Lagrange \EMHC multiplier. This approach was first described in \cite{FW94} for FPUT chains with super-quadratic
potentials and has recently been generalized to peridynamical media in \cite{PV18}.
\item
It is also possible \BMHC to \EMHC construct traveling waves by maximizing $\calP$ under the constraint of prescribed $\calK$. This idea was introduced in \cite{FV99},  has later been refined in \cite{Her10} and provides also the base for our approach as it allows to impose  additional shape constraints as discussed in  \S\ref{sect:Existence}.  Moreover,  a homogeneous constraint is \BMHC more easily \EMHC imposed than a non-homogeneous one and this simplifies the numerical computation of traveling waves.
\item
The concepts of spatial dynamics and center-manifold reduction \BMHC have \EMHC been exploited
in \cite{IK00,IJ05} for lattice waves. This non-variational approach is restricted to small amplitude waves but provides --- at least in principle --- a complete picture on all solutions to \eqref{Eqn:TW1}. Moreover, explicit or approximate solutions are available
for some special potentials or asymptotic regimes. We refer to \cite{TV14,SV18} and the more detailed discussion in \S\ref{sect:AsympLin}.
\end{enumerate}
Our key findings on the existence of peridynamical waves can informally be summarized as follows. The precise statements can be found in Assumption \ref{Ass.AdmissiblePhi}, Corollary \ref{Cor:ExistenceSolitaryWaves}, Proposition \ref{Prop.SQCriterion}, and
Proposition \ref{Prop:ConvergencePeriodicWaves}.

\begin{result*}[existence of solitary waves]
Suppose that the coefficients and interaction potentials in \eqref{Case:ContinousCoupling} or \eqref{Case:DiscreteCoupling}
\BMHC are sufficiently smooth and satisfy natural integrability  \EMHC and super-quadraticity assumptions. Then there exists
a family of solitary waves which is parameterized by $K=\calK\at{W}$ and comes with unimodal, even, and nonnegative profile functions $W$. Moreover, each solitary wave
can be approximated by periodic ones.
\end{result*}
Notice that almost nothing is known about the uniqueness or dynamical stability
of solitary traveling waves in peridynamical media and it remains a very challenging task
solve the underlying mathematical problems. In \S\ref{sect:AsympLin} we thus discuss the asymptotic regimes of near-sonic and high-speed waves, in which one might hope for first rigorous results in this context.
%
%
%
\paragraph{Improvement dynamics and numerical simulations}
%
%
A particular ingredient to our analysis is the \emph{improvement operator}
\begin{align}
\label{Eqn:ImprovementOperator}
\calT\at{W}:=\mu\at{W}\,\partial\calP\at{W}\,,\qquad \mu\at{W}:=\norm{W}_2/\norm{\partial\calP\at{W}}_2\,,
\end{align}
which conserves the norm constraint $\calK$ but increases the variational objective function $\calP$, see  Proposition \ref{Prop:ImprovementDynamics}. Moreover,
the operator $\calT$ respects \BMHR --- under \EMHR natural convexity and normalization assumptions on the involved micro-potentials \BMHR --- the \EMHR unimodality, the evenness, and the positivity (or negativity) of $W$. \BMHC This \EMHC enables us in \S\ref{sect:Existence} to restrict the constrained optimization problem to a certain shape cone in $\fspaceL^2\at{I_L}$ without changing the Euler-Lagrange equation for \BMHC maximizers,  \EMHC see  Corollary \ref{Cor:MaximizerAreWaves}.
\par%
By iterating \eqref{Eqn:ImprovementOperator} in the discrete dynamics
\begin{align}
\label{Eqn:ImprovementDynamics}
W_0\quad\rightsquigarrow\quad W_1=\calT\at{W_0}\quad\rightsquigarrow\quad\tdots
\quad\rightsquigarrow\quad W_{n+1}=\calT\at{W_n}\quad\rightsquigarrow\quad\tdots
\end{align}
we can also construct sequences $\at{W_n}_{n\in\Nset}$ of profiles functions such that
$\calK\at{W_n}$ remains conserved while $\calP\at{W_n}$ is bounded and strictly increasing in $n$. Using weak compactness for periodic waves as well as a variant of concentration compactness for solitary waves one can also show that there exist strongly convergent subsequences and that any accumulation point must be a traveling wave, see the proofs of Lemma \ref{Lem:PeriodicMaximizers} and \BMHC Theorem \EMHC \ref{Prop:SolitaryCompactness}. However, due to the lack of uniqueness results we are not able to prove the uniqueness of accumulation points \BMHC or \EMHC the convergence of the whole sequence. Moreover, the set of accumulation points might depend on the choice of the initial datum $W_0$.
\par
The improvement \BMHC mapping \EMHC  is nonetheless very useful for numerical purposes. Fixing a periodicity length $L$ it can easily bee implemented:
\begin{enumerate}
\item
divide the $x$-domain $I_L$ and the $\xi$-domain $\cointerval{0}{\infty}$ into a large but finite number of subintervals of length $0<\eps\ll 1$,  and
\item
replace all integrals with respect to $x$ and $\xi$ by Riemann sums as in \eqref{Eqn:TwoSettings}.
\end{enumerate}
The resulting scheme has been used to compute the numerical data presented in this paper. It shows very good and robust convergence properties in practice, and this indicates for a wide class of peridynamical media that there exists in fact a unique stable solution to the constrained optimization problem. A first example is shown in Figure \ref{fig:silling} and corresponds to the peridynamical medium \eqref{Eqn.Silling}, for which approximate solitary waves are constructed in \cite{Sil16} by means of formal asymptotic expansions and ODE arguments. Notice that $W_0$ was always chosen to be non-positive in order to pick up the super-quadratic branch in the potential and that there are no solitary waves extending into the harmonic branch, \BMHR cf. the discussion at the beginning of \S\ref{sect:Superquadraticity}.\EMHR
\par
\BMHR We finally mention that variants of \eqref{Eqn:ImprovementDynamics} have already been used in \cite{FV99, EP05, Her10} for the numerical computations of periodic waves in FPUT chains. Moreover, the improvement dynamics shares some similarities with the Petviashvili iteration for traveling waves in Hamiltonian PDEs, see for instance \cite{PS04}. Both approximation schemes combine linear pseudo-differential operators with pointwise nonlinearities and a dynamical normalization rule, but the details are rather different. In particular, \eqref{Eqn:ImprovementDynamics} comes with a Lyapunov function but does not allow to prescribe the wave speed.
\EMHR
%
%
%
\section{Variational approach for super-quadratic potentials}\label{sect:Existence}
%
 \BMHC We  now \EMHC study the existence problem for a  class of potentials in a certain cone of $\fspaceL^2\at{I_L}$ and  \BMHC begin by \EMHC clarifying the precise setting.
%
\subsection{Setting of the problem}
%
A smooth function $\Phi:\cointerval{0}{\infty}\to\cointerval{0}{\infty}$ is called \emph{weakly super-quadratic} if it does not vanish identically
and satisfies
\begin{align}
\label{Eqn:Superquadraticty}
\Phi\at{0}=\Phi^\prime\at{0}=0\,,\qquad \quad
\Phi^{\prime\prime}\at{r}r\geq  \Phi^\prime\at{r}\geq 0\qquad\text{for all}\quad r\geq0\,.
\end{align}
Such a function is increasing, convex, and grows at least quadratically since elementary arguments  \BMHC --- including differentiation with respect to $\lambda$ --- \EMHC show that
\begin{align}
\label{Eqn:PropertiesPhi} \Phi^\prime\at{r}r\geq 2\Phi\at{r}\,, \qquad\quad
\Phi\at{\BMHC \la \EMHC r}\geq \la^2\Phi\at{r}\quad \text{for}\quad \la>1\,,
\end{align}
Examples are the harmonic potential $\Phi\at{r}=cr^2$ and
any analytic function with non-negative Taylor-coefficients. In what follows we rely on the following standing assumption.
\begin{assumption}[admissible potentials]
\label{Ass.AdmissiblePhi}
In the discrete-coupling case, each potential $\Phi_m$ is  supposed to be super-quadratic in the sense of \eqref{Eqn:Superquadraticty}. In the continuous-coupling \BMHR case, we \EMHR assume that $\Phi$ is super-quadratic and that $\alpha$, $\beta$ are nonnegative functions on $\oointerval{0}{\infty}$ such that
\begin{align*}
\int\limits_0^\infty \bat{\xi+\xi^{2}}\al\at\xi\beta^2\at\xi \phi\bat{\xi^{1/2}\,\beta\at\xi\,\sqrt{2K}} \dint\xi<\infty
\end{align*}
holds for all $K>0$ with $\phi\at{r}:=r^{-1}\babs{\Phi^\prime\at{r}}$.  \BMHR Moreover, in the proofs we suppose that each potential is three times continuously differentiable on the interval $\cointerval{0}{\infty}$. \EMHR
\end{assumption}
Both for finite and infinite $L$, we denote by $\calC$ the positive cone of all $\fspaceL^2\at{I_L}$-functions that are even, nonnegative and unimodal, where the latter means increasing and decreasing for $-L<x<0$ and $0>x>L$, respectively.  This reads
\begin{align*}
\calC:=\ol{\{W\in \fspace{C}^\infty_{\text{c}}\at{I_L}\;:\;
\text{$W\at{x}=W\at{-x}\geq0$ and $-W^\prime\at{x}=W^\prime\at{-x}\geq0$ for all $x\geq0$}
\}}\,,
\end{align*}
where the closure has to be taken with respect to the norm in $\fspaceL^2\at{I_L}$, and we readily verify that $\calC$ is a convex cone and closed  under both strong and weak convergence. Moreover, the decay estimate
\begin{align}
\label{Eqn:UnimodalDecay}
0\leq W\at{x}\leq \frac{\norm{W}_2}{\sqrt{2\abs{x}}}
\end{align}
holds for any $W\in\calC$ and any $x\in I_L$.
\par
We  next collect some important properties of the convolution operators $\calA_\xi$ and recall that $\mathrm{sinc}\at{y}:=y^{-1}\sin\at{y}$.
\begin{lemma}[properties of $\calA_\xi$]
\label{Lem.PropsA}
For given $\xi\in\oointerval{0}{\infty}$, the following statements are satisfied for both finite and infinite $L$:
\begin{enumerate}
\item
$\calA_\xi$ maps $\fspaceL^2\nat{I_L}$ to
$\fspaceL^2\nat{I_L}\cap \fspaceL^\infty\bat{I_L}$ with $\norm{\calA_\xi W}_2\leq  \xi\norm{W}_2$ and
$\norm{\calA_\xi W}_\infty\leq  \xi^{1/2}\norm{W}_2$.
\item
$\calA_\xi$ is symmetric.
\item
$\calA_\xi$ diagonalizes in Fourier space and has symbol function $\widehat{\calA}_\xi\at{k}=\xi\mathrm{sinc}\at{k\xi/2}$.
\item
$A_\xi$ respects the evenness, the non-negativity, and the unimodality
of functions.
\end{enumerate}
Moreover,  $A_\xi$ is compact for $L<\infty$ as it maps weakly converging into strongly converging sequences.
\end{lemma}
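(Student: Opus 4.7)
The plan is to use the explicit representation $\calA_\xi W = \chi_\xi \ast W$ throughout and dispatch the five assertions with standard convolution estimates and invariance arguments.

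For (i), the $L^\infty$ bound follows from the Cauchy-Schwarz estimate $\babs{\calA_\xi W\at{x}}^2 \leq \xi \int_{x-\xi/2}^{x+\xi/2} W\at{\tilde x}^2\dint{\tilde x}\leq \xi \nnorm{W}_2^2$, while the $L^2$ bound is Young's inequality $\nnorm{\chi_\xi \ast W}_2 \leq \nnorm{\chi_\xi}_1\nnorm{W}_2 = \xi \nnorm{W}_2$ in its standard form for $L=\infty$ and its periodic counterpart for $L<\infty$. Symmetry (ii) is immediate from Fubini and the evenness of the kernel $\chi_\xi$. For (iii) the direct calculation $\int_{-\xi/2}^{\xi/2} \mhexp{-\iu k x}\dint{x} = \xi\,\mathrm{sinc}\at{k\xi/2}$ gives the Fourier multiplier symbol on $\Rset$; the same formula describes the Fourier coefficients in the periodic setting since the eigenfunction identity $\at{\chi_\xi\ast\mhexp{\iu k_n \cdot}}\at{x}=\xi\,\mathrm{sinc}\at{k_n\xi/2}\,\mhexp{\iu k_n x}$ holds irrespective of the size of $\xi$ relative to $L$.

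For (iv), the substitution $\tilde x\mapsto -\tilde x$ in the defining integral combined with $W\at{-\tilde x}=W\at{\tilde x}$ establishes evenness, and non-negativity is trivial because the kernel is non-negative. For unimodality I differentiate to obtain $\at{\calA_\xi W}^\prime\at{x} = W\at{x+\xi/2}-W\at{x-\xi/2}$ and observe that $\nabs{x+\xi/2}\geq\nabs{x-\xi/2}$ whenever $x\geq 0$; the even-decreasing character of $W$ on $\cointerval{0}{\infty}$ therefore forces the derivative to be non-positive on $\oointerval{0}{\infty}$. This argument is rigorous on the dense smooth subset used to define $\calC$ and extends to all of $\calC$ by the $\fspaceL^2$-continuity already established in (i).

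The compactness claim in the periodic case is the most substantive point. The cleanest route is Fourier-theoretic: the operator is diagonal in the periodic Fourier basis with eigenvalues $\xi\,\mathrm{sinc}\at{\pi n\xi/\at{2L}}$ that tend to zero as $\nabs{n}\to\infty$, hence $\calA_\xi$ is a norm-limit of finite-rank truncations and therefore compact. An alternative physical-space proof rests on Arzel\`a-Ascoli: the Cauchy-Schwarz estimate $\babs{\calA_\xi W\at{x}-\calA_\xi W\at{x^\prime}}\leq\sqrt{2\babs{x-x^\prime}}\,\nnorm{W}_2$, obtained by integrating over the symmetric difference of the two integration intervals, together with the uniform $\fspaceL^\infty$ bound from (i), yields equicontinuity and uniform boundedness of $\{\calA_\xi W_n\}$ for any weakly convergent sequence $\{W_n\}$ in $\fspaceL^2\at{I_L}$. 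A uniformly convergent subsequence is then extracted, and its limit is identified with $\calA_\xi W$ via the pointwise convergence $\at{\calA_\xi W_n}\at{x}=\bskp{W_n}{\chi_\xi\at{x-\cdot}}\to\bskp{W}{\chi_\xi\at{x-\cdot}}$ guaranteed by the weak convergence of $W_n$.
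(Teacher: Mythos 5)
The paper offers no proof of this lemma beyond ``elementary computations and standard arguments'' and a pointer to \cite{Her10}, so your write-up supplies exactly the details the authors delegate to the reader, and for $L=\infty$ every step is correct: Cauchy--Schwarz and Young's inequality for (i), Fubini and the evenness of the kernel for (ii), the explicit integral $\int_{-\xi/2}^{\xi/2}\mhexp{-\iu kx}\dint{x}=\xi\,\mathrm{sinc}\at{k\xi/2}$ for (iii), the sign of $W\at{x+\xi/2}-W\at{x-\xi/2}$ for (iv), and either of your two compactness routes for the final claim (the Fourier route, via eigenvalues tending to zero, is the shorter and matches the spirit of \cite{Her10}; the Arzel\`a--Ascoli route is equally valid and has the merit of working directly in physical space).

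There is, however, one point where your argument silently assumes that the averaging window $\ccinterval{x-\xi/2}{x+\xi/2}$ meets at most one period, i.e.\ $\xi\leq 2L$. First, in (i) the step $\int_{x-\xi/2}^{x+\xi/2}W^2\leq\nnorm{W}_2^2$ reads $\nnorm{W}_2^2$ as $\int_{I_L}W^2$, which dominates the integral over a window of length $\xi$ only when $\xi\leq 2L$; for $W\equiv c$ and $\xi>2L$ one has $\nnorm{\calA_\xi W}_\infty=\xi c>\xi^{1/2}\sqrt{2L}\,c=\xi^{1/2}\nnorm{W}_2$, so in that regime the claimed $\fspaceL^\infty$ bound itself fails. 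Second, in (iv) the inequality $W\at{x+\xi/2}\leq W\at{x-\xi/2}$ for $x\geq0$ uses that $W$ decreases in $\abs{x}$ on all of $\cointerval{0}{\infty}$, which is false for the periodic extension: taking $L=1$, $\xi=3$ and the periodized tent function $W\at{x}=1-\abs{x}$ one computes $\at{\calA_3W}\at{0}=5/4<7/4=\at{\calA_3W}\at{1}$, so unimodality on $I_L$ is not preserved. Both defects vanish under the restriction $\xi\leq 2L$ and are absent for $L=\infty$, so you should either add that hypothesis to the periodic assertions or flag that the statement requires it; the paper's one-line proof inherits the same blind spot from the FPUT setting of \cite{Her10}, where $\xi=1$, but since you are writing the computation out, the gap surfaces in your text and should be acknowledged.
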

\begin{proof}
All assertions follow from elementary computations and standard arguments, see for instance \cite[Lemma 2.5]{Her10}.
\end{proof}
We are now able to show that our assumptions and definitions allow for a consistent setting of the variational traveling wave problem inside the cone $\calC$.
\begin{lemma}[properties of the potential energy functional]
\label{Lem:PotentialEnergy}
The functional $\calP$ is well-defined, strongly continuous and G\^ateaux differentiable on the cone $\calC$, where the derivative maps $\calC$ \BMHR continously \EMHR into itself.  Moreover, $\calP$ is convex, super-quadratic in the sense of
\begin{align}
\label{Lem:PotentialEnergy.E1}
\bskp{\partial\calP\at{W}}{W}\geq 2\calP\at{W}\geq0\,,\qquad
\calP\at{\la W}\geq \la^2\calP\at{W}\quad \text{for \; $\la\geq1$ }\,,
\end{align}
and  non-degenerate as $\partial\calP\at{W}\neq 0$ holds for any $W\in\calC$ with $\calP\at{W}>0$.
\end{lemma}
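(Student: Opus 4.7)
The plan is to bootstrap all four assertions from a single pointwise estimate on the potential together with the mapping properties of $\calA_\xi$ provided by Lemma \ref{Lem.PropsA}. The key ingredient is $\Phi\at{r}\leq \tfrac12 r^2 \phi\at{r}$ for $r\geq 0$, obtained by integrating $\Phi^\prime\at{r}=r\phi\at{r}$ from $0$ to $r$ once one observes that $\phi\at{r}=\Phi^\prime\at{r}/r$ is monotonically increasing; this monotonicity is a direct consequence of $\Phi^{\prime\prime}\at{r}\,r\geq\Phi^\prime\at{r}$. For any $W\in\calC$ with $K=\calK\at{W}$, we combine this with $\nnorm{\calA_\xi W}_\infty\leq \xi^{1/2}\sqrt{2K}$ and $\nnorm{\calA_\xi W}_2\leq \xi\sqrt{2K}$, using also $\calA_\xi W\geq 0$ and the monotonicity of $\phi$, to obtain
\begin{align*}
\int\limits_{I_L}\Phi\bat{\be\at\xi\calA_\xi W}\dint{x} \leq \tfrac12\,\be\at\xi^2\,\phi\bat{\xi^{1/2}\be\at\xi\sqrt{2K}}\,\nnorm{\calA_\xi W}_2^2 \leq K\,\xi^2\,\be\at\xi^2\,\phi\bat{\xi^{1/2}\be\at\xi\sqrt{2K}}\,.
\end{align*}
Assumption \ref{Ass.AdmissiblePhi} now makes the right-hand side integrable in $\xi$, so $\calP\at{W}<\infty$. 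The formula in \eqref{Eqn:EnergyFormulas2} (respectively \eqref{Eqn:EnergyFormulas1}) for $\partial\calP$ follows from direct differentiation of the integrand combined with a Fubini exchange based on the symmetry of $\calA_\xi$, and an analogous pointwise estimate with $\babs{\Phi^\prime\at{r}}=r\phi\at{r}$ in place of $\Phi\at{r}$ shows $\partial\calP\at{W}\in\fspaceL^2\at{I_L}$.

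Strong continuity of $\calP$ and G\^ateaux differentiability then follow from dominated convergence: along any sequence $W_n\to W$ in $\fspaceL^2$ with $\calK\at{W_n}$ uniformly bounded by some $\tilde K$, the estimate above with $\tilde K$ in place of $K$ supplies a common $\xi$-integrable majorant, while the continuity of $\calA_\xi$ on $\fspaceL^2$ produces pointwise convergence of the integrands. The same machinery yields the continuity of $\partial\calP$ as a map into $\fspaceL^2\at{I_L}$.

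That $\partial\calP$ sends $\calC$ into itself is a direct consequence of part (4) of Lemma \ref{Lem.PropsA}: for $W\in\calC$ the profile $\calA_\xi W$ is nonnegative, even, and unimodal; composition with the nondecreasing, nonnegative function $\Phi^\prime$ preserves these three features; a second application of $\calA_\xi$ does so as well; and a sum (discrete case) or $\xi$-integral (continuous case) against the nonnegative weight $\al\at\xi\be\at\xi$ stays in the closed convex cone $\calC$. Convexity of $\calP$ is inherited from the convexity of $\Phi$ via the linearity of $\calA_\xi$.

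Finally, the super-quadraticity inequalities follow by integrating the pointwise relations in \eqref{Eqn:PropertiesPhi} with $r=\be\at\xi\calA_\xi W\at{x}$; for the first, the symmetry of $\calA_\xi$ lets us rewrite $\bskp{\partial\calP\at{W}}{W}$ as $\int\int \al\be\,\Phi^\prime\bat{\be\calA_\xi W}\cdot\calA_\xi W\dint{x}\dint\xi$, and $\Phi^\prime\at{r}r\geq 2\Phi\at{r}$ then yields the lower bound $2\calP\at{W}$. Non-degeneracy is immediate: if $\calP\at{W}>0$, then $\bskp{\partial\calP\at{W}}{W}\geq 2\calP\at{W}>0$ forces $\partial\calP\at{W}\neq 0$. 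The main subtlety I anticipate is verifying that the $\xi$-integrable majorant controls the integrands uniformly in a small neighborhood of $W$ in $\calC$, which rests only on the freedom to replace $K$ by a slightly larger constant in Assumption \ref{Ass.AdmissiblePhi}; all remaining steps are routine.
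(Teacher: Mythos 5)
Your proposal is correct and follows essentially the same route as the paper's proof: the pointwise bounds $\Phi^\prime\at{r}=r\phi\at{r}$ and $\Phi\at{r}\leq\tfrac12 r^2\phi\at{r}$ with the monotonicity of $\phi$, combined with Lemma \ref{Lem.PropsA} and Assumption \ref{Ass.AdmissiblePhi}, give integrability; dominated convergence gives continuity of $\calP$ and $\partial\calP$; and the cone invariance and super-quadraticity arguments are the ones used in the paper. The only (harmless) deviation is non-degeneracy, where the paper argues via convexity that a critical point would be a global minimizer contradicting $\calP\at{W}>0=\calP\at{0}$, whereas you deduce it more directly from $\bskp{\partial\calP\at{W}}{W}\geq 2\calP\at{W}>0$; both are valid.
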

\begin{proof}
We present the proof for the continuous-coupling case \BMHR \eqref{Case:ContinousCoupling}  only \EMHR but emphasize that the arguments  for \eqref{Case:DiscreteCoupling} are similar.
\par
\emph{\ul{Integrability}}: For given $W\in\calC$ with $\calK\at{W}=K$ we estimate
\begin{align*}
0\leq \Phi^\prime\bat{\beta\at\xi\calA_\xi W}\leq \be\at\xi\at{\sup_{0\leq r\leq \beta\at\xi\norm{\calA_\xi W}_\infty}\phi\at{r}}\calA_\xi W \leq
\be\at\xi \phi\at{\xi^{1/2}\beta\at\xi\sqrt{2K}}\calA_\xi W
\end{align*}
with $\phi$ as in Assumption \ref{Ass.AdmissiblePhi}, where we used that the super-quadraticity of $\Phi$ implies the monotonicity of $\phi$. The estimates from Lemma \ref{Lem.PropsA} imply via
\begin{align*}
\norm{\calA_\xi \Phi^\prime\bat{\beta\at\xi\calA_\xi W}}_2\leq \xi^2 \be\at\xi \phi\at{\xi^{1/2}\beta\at\xi\sqrt{2K}}\norm{W}_2
\end{align*}
the estimate
\begin{align*}
\norm{\partial\calP\at{W}}_2\leq \int\limits_0^\infty \al\at\xi\BMHR\beta\at\xi \EMHR\bnorm{
\calA_\xi \Phi^\prime\bat{\beta\at\xi\calA_\xi W}}_2\dint\xi<\infty\,,
\end{align*}
and similarly we show $\calP\at{W}<\infty$ using \eqref{Eqn:PropertiesPhi}. Moreover, since $\calC$ is invariant under $\calA_\xi$, multiplication with non-negative constants, and the \BMHC composition \EMHC with $\Phi^\prime$, we easily show that $\partial\calP\at{W}\in\calC$.
\par
\BMHR \emph{\ul{Continuity}}: Let $\at{W_n}_{n\in\Nset}\subset \calC$ be a sequence that converges strongly in $\fspaceL^2\at{I_L}$ to some limit $W_\infty$, where the closedness properties of $\calC$ impliy $W_\infty\in\calC$. Extracting a subsequence we can assume that $W_n$ converges pointwise to $W_\infty$ and that there exists a dominating function $\ol{W}\in\fspaceL^2\at{I_l}$ such that $\abs{W_n\at{x}}\leq \ol{W}\at{x}$ for all $n\in\Nset$ and almost all $x\in I_L$. Using the similar arguments as above we readily demonstrate  that the functions
\begin{align*}
V_n\pair{\xi}{\cdot}:=\al\at\xi\beta\at\xi\calA_\xi\Phi^\prime\bat{\beta\at\xi\calA_\xi W_n}
\end{align*}
converge  pointwise in $\pair{\xi}{x}$ to $V_\infty$ and are moreover dominated by a function $\ol{V}\in\fspaceL^1\bat{\oointerval{0}{\infty},\fspaceL^2\at{I_L}}$. Lebesgue's theorem ensures the strong convergence
$\partial\calP\at{W_n}\to \partial\calP\at{W_\infty}$ in $\fspaceL^2\at{I_L}$ along the choosen subsequence and due to the uniqueness of the accumulation point we finally establish the convergence of the entire sequence by standard \BMHC arguments. \EMHC  The proof of $\calP\at{W_n}\to \calP\at{W_\infty}$ is analogous. \EMHR
\par
\emph{\ul{Further properties}}:
The convexity and super-quadraticity of $\calP$ is a direct consequence of the corresponding properties of $\Phi$, and the convexity inequality
\begin{align}
\label{Eqn:ConvexityInequality}
\calP\bat{\widetilde{W}}-\calP\at{W}\geq \bskp{\partial\calP\at{W}}{\widetilde{W}-W}
\end{align}
ensures that any critical point of $\calP$ must be a global minimizer. In particular, we find the implication
\begin{align}
\label{Eqn:Nondegenericity}
\calP\at{W}>0=\calP\at{0}=\min\calP\quad\implies \quad \partial\calP\at{W}\neq0
\end{align}
and the proof is complete.
\end{proof}
We emphasize that all arguments presented  below hold analogously with $W\in -\calC$ for potentials that are super-quadratic on $\ocinterval{-\infty}{0}$, see Figure \ref{fig:silling} for an application. If $\Phi$ is super-quadratic on $\Rset$, our results imply under certain conditions the existence of two family of waves, which are usually called expansive ($W\geq0)$ or compressive ($W\leq0$).
%
%
\subsection{Solutions of the constrained optimization problem} 
\BMHR In this paper, \EMHR we prove the existence of traveling waves with unimodal profile functions \BMHC  for $0 <L \leq \infty $ \EMHC by solving the constrained optimization problem
\begin{align}
\label{Eqn:OptProblem}
\text{maximize $\calP\at{W}$ subject to $W\in\calC_K$}
\end{align}
with
\begin{align}
\label{Eqn:DefCK}
\calC_K:=\{W\in\calC\;:\calP\at{W}>0\text{ and } \calK\at{W}=K\}\,.
\end{align}
We further set
\begin{align}
\label{Eqn:DefPK}
P\at{K}:=\sup\big\{\calP\at{W}\;:\,W\in\calC_K\big\}
\end{align}
\BMHC  and \EMHC  recall that $\si^2$, the square of wave speed,  can be regarded as the \BMHC  Lagrange \EMHC  multiplier to the norm constraint $\calK\at{W}=K$. The second important observation, which we next infer \BMHC  from \EMHC  the properties of the improvement dynamics,  is that the shape constraint $W\in\calC$ does not contribute to the Euler-Lagrange equation for solutions to \eqref{Eqn:OptProblem}. The analogous observation for FPUT chains has been reported in \cite{Her10} and a similar result has been derived in \cite{KS12} using different techniques.
\begin{proposition}[properties of the improvement dynamics]
\label{Prop:ImprovementDynamics}
The set $\calC_K$ \BMHC in \EMHC   \eqref{Eqn:DefCK}  is invariant under the action of the improvement operator $\calT$ \BMHC in \EMHC  \eqref{Eqn:ImprovementOperator} and we have
\begin{align*}
\calP\bat{\calT\at{W}}\geq \calP\at{W}
\end{align*}
for any $W\in\calC_K$, where \BMHC  equality \EMHC  holds if and only if $W=\calT\at{W}$.
\end{proposition}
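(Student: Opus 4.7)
The plan is to derive both assertions directly from the convexity inequality \eqref{Eqn:ConvexityInequality} and the Cauchy--Schwarz inequality, using only the properties of $\calP$ and $\calC$ already collected in Lemma \ref{Lem:PotentialEnergy}.

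I would first check that $\calT$ maps $\calC_K$ into itself. For any $W\in\calC_K$ the defining condition $\calP\at{W}>0$ combined with the non-degeneracy implication \eqref{Eqn:Nondegenericity} yields $\partial\calP\at{W}\neq 0$, so the scaling factor $\mu\at{W}$ is finite and strictly positive. Since $\partial\calP\at{W}\in\calC$ by Lemma \ref{Lem:PotentialEnergy} and $\calC$ is a convex cone, it follows that $\calT\at{W}\in\calC$, while the identity $\norm{\calT\at{W}}_2=\mu\at{W}\norm{\partial\calP\at{W}}_2=\norm{W}_2$ shows that the norm constraint is preserved. The remaining requirement $\calP\bat{\calT\at{W}}>0$ will then follow a posteriori from the monotonicity proved below.

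The monotonicity and its equality case can be handled simultaneously. Inserting $\widetilde W=\calT\at{W}=\mu\at{W}\,\partial\calP\at{W}$ into \eqref{Eqn:ConvexityInequality} and rearranging gives
\begin{align*}
\calP\bat{\calT\at{W}}-\calP\at{W}\;\geq\;\mu\at{W}\,\norm{\partial\calP\at{W}}_2^2-\bskp{\partial\calP\at{W}}{W}\;=\;\norm{W}_2\norm{\partial\calP\at{W}}_2-\bskp{\partial\calP\at{W}}{W},
\end{align*}
and the right-hand side is non-negative by Cauchy--Schwarz. If equality holds, the Cauchy--Schwarz bound must saturate, so $W$ and $\partial\calP\at{W}$ are positively linearly dependent; matching $\fspaceL^2$-norms then forces the proportionality factor to equal $\mu\at{W}$, that is, $W=\calT\at{W}$. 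The converse implication is immediate, and no substantial obstacle is anticipated since every analytic ingredient is already furnished by Lemma \ref{Lem:PotentialEnergy}. The only subtlety worth flagging is that the strict positivity $\calP\at{W}>0$ encoded in the definition of $\calC_K$ is exactly what is needed to guarantee $\partial\calP\at{W}\neq 0$ via \eqref{Eqn:Nondegenericity}, which makes $\mu\at{W}$ meaningful on the whole set.
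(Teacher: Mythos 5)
Your proposal is correct and follows essentially the same route as the paper: both verify invariance via Lemma \ref{Lem:PotentialEnergy} and \eqref{Eqn:Nondegenericity}, and both obtain monotonicity by inserting $\widetilde W=\calT\at{W}$ into the convexity inequality \eqref{Eqn:ConvexityInequality}. The only cosmetic difference is that you recognize the resulting lower bound $\norm{W}_2\norm{\partial\calP\at{W}}_2-\bskp{\partial\calP\at{W}}{W}$ as nonnegative via Cauchy--Schwarz (with its equality case giving $W=\calT\at{W}$), whereas the paper rewrites the very same quantity as $\tfrac12\mu^{-1}\at{W}\bnorm{\calT\at{W}-W}_2^2$ using $\norm{W}_2=\norm{\calT\at{W}}_2$, from which both conclusions are read off directly.
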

\begin{proof}
The operator $\calT$ is well defined on $\calC$ due to \eqref{Eqn:Nondegenericity} and the claimed invariance property  is a direct consequence of Lemma~\ref{Lem:PotentialEnergy}. The convexity inequality \eqref{Eqn:ConvexityInequality} implies
\begin{align}
\label{Eqn:ImprovementEstimate}
\calP\bat{\calT\at{W}}-\calP\at{W}&\geq \bskp{\partial\calP\at{W}}{\calT\at{W}-W}
\notag\\&=%
\mu^{-1}\at{W} \bskp{\calT\at{W}}{\calT\at{W}-W}
\notag\\&=%
\tfrac12\mu^{-1}\at{W}\bnorm{\calT\at{W}-W}^2\,,
\end{align}
where we used  $\mu\at{W}>0$ and that $\norm{W}_2=\norm{\calT\at{W}}_2$ holds by construction.
\end{proof}
\begin{corollary}
\label{Cor:MaximizerAreWaves}
Any solution $W\in\calC_K$ to the optimization problem \eqref{Eqn:OptProblem} satisfies the traveling wave equation \eqref{Eqn:TW2} for some \BMHC  Lagrange \EMHC  multiplier $\si^2\geq K^{-1}P\at{K}>0$.
\end{corollary}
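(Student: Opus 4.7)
The plan is to exploit Proposition \ref{Prop:ImprovementDynamics}: since the improvement operator $\calT$ leaves $\calC_K$ invariant and strictly increases $\calP$ unless $W$ is a fixed point, any maximizer must satisfy $W=\calT\at{W}$, which is essentially the traveling wave equation.

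First I would argue that if $W\in\calC_K$ solves \eqref{Eqn:OptProblem}, then $\calP\bat{\calT\at{W}}\leq \calP\at{W}=P\at{K}$ by maximality, while Proposition \ref{Prop:ImprovementDynamics} gives the reverse inequality because $\calT\at{W}\in\calC_K$. Equality in the chain then forces $W=\calT\at{W}$ via the second assertion of Proposition \ref{Prop:ImprovementDynamics} (or, equivalently, by inspecting the quadratic remainder in \eqref{Eqn:ImprovementEstimate}). Unpacking the definition of $\calT$ in \eqref{Eqn:ImprovementOperator} yields $W=\mu\at{W}\,\partial\calP\at{W}$, and since $\partial\calK\at{W}=W$ for the quadratic constraint functional, this rewrites as \eqref{Eqn:TW2} with the identification $\si^2=\mu\at{W}^{-1}=\norm{\partial\calP\at{W}}_2/\norm{W}_2>0$, where positivity comes from the non-degeneracy $\partial\calP\at{W}\neq 0$ established in Lemma \ref{Lem:PotentialEnergy}.

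Next I would derive the lower bound on $\si^2$. Testing the traveling wave equation $\si^2 W=\partial\calP\at{W}$ against $W$ in $\fspaceL^2\at{I_L}$ gives
\begin{align*}
\si^2\,\norm{W}_2^2=\bskp{\partial\calP\at{W}}{W}\geq 2\calP\at{W}\,,
\end{align*}
where the inequality is the super-quadraticity bound \eqref{Lem:PotentialEnergy.E1}. Using $\norm{W}_2^2=2K$ and $\calP\at{W}=P\at{K}$ I obtain $2K\si^2\geq 2P\at{K}$, i.e.\ $\si^2\geq K^{-1}P\at{K}$. Finally, $P\at{K}>0$ follows from the very definition of $\calC_K$ in \eqref{Eqn:DefCK}, since every admissible competitor has $\calP\at{W}>0$.

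I do not anticipate a serious obstacle here: all technical work has been shifted into Proposition \ref{Prop:ImprovementDynamics} and Lemma \ref{Lem:PotentialEnergy}. The one point that deserves care is the step $\calP\bat{\calT\at{W}}=\calP\at{W}\Rightarrow W=\calT\at{W}$; this is not merely optimality of $W$ for the linearized functional, but rather the sharp version of \eqref{Eqn:ImprovementEstimate}, which I would invoke directly from Proposition \ref{Prop:ImprovementDynamics} rather than re-derive.
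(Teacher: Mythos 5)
Your proposal is correct and follows essentially the same route as the paper: it deduces the fixed-point identity $W=\calT\at{W}$ from Proposition \ref{Prop:ImprovementDynamics} together with maximality, identifies $\si^2=\mu^{-1}\at{W}$, and then tests the resulting equation with $W$ and invokes \eqref{Lem:PotentialEnergy.E1} to get the lower bound on $\si^2$. The only difference is that you spell out the equality-forces-fixed-point step, which the paper leaves implicit.
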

\begin{proof}
Any maximizer $W$ belongs to $\calC_K$, so Proposition \ref{Prop:ImprovementDynamics} ensures in combination with \eqref{Eqn:ImprovementOperator} the validity of \eqref{Eqn:TW2} with $\si^2=\mu^{-1}\at{W}$. Testing this equation with $W$ we finally obtain
\begin{align*}
\si^2 2K =\bskp{\si^2 W}{W}= \bskp{\partial\calP\at{W}}{W}\geq 2\calP\at{W}=2P\at{K}
\end{align*}
thanks to \eqref{Lem:PotentialEnergy.E1}.
\end{proof}
It remains to show the existence of maximizers and here we have to distinguish between the periodic case ($L<0$) and the solitary one ($L=\infty$). The latter is more involved and will be discussed in the subsequent section.
\begin{lemma}[existence of periodic maximizers]
\label{Lem:PeriodicMaximizers}
For any $K>0$ and any $0<L<\infty$, the constrained optimization problem \eqref{Eqn:OptProblem} admits at least one solution.
\end{lemma}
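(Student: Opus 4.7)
The plan is to apply the direct method of the calculus of variations, where the super-quadraticity of $\calP$ compensates for the possible loss of norm under weak convergence. First, I would check that $\calC_K$ is nonempty and that $P\at{K}$ is finite and strictly positive. Nonemptiness follows by scaling any smooth, even, unimodal bump in $\calC$ to have $\calK=K$ and observing that $\calA_\xi W_0\not\equiv0$ for $\xi>0$ together with the super-quadraticity of the potentials yields $\calP\at{W_0}>0$. Finiteness of $P\at{K}$ comes from the pointwise bound $\nnorm{\calA_\xi W}_\infty\leq \xi^{1/2}\sqrt{2K}$ on the constraint set together with the estimate $\Phi\at{r}\leq \tfrac12 r^2\phi\at{r}$ (which follows from \eqref{Eqn:PropertiesPhi}) and the integrability hypothesis of Assumption~\ref{Ass.AdmissiblePhi}.

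Next I would select a maximizing sequence $\at{W_n}_{n\in\Nset}\subset\calC_K$ with $\calP\at{W_n}\to P\at{K}$. The constraint $\nnorm{W_n}_2^2=2K$ bounds the sequence in $\fspaceL^2\at{I_L}$, so after passing to a subsequence we have $W_n\rightharpoonup W_\infty$ weakly, and the weak closedness of $\calC$ stated in \S\ref{sect:Existence} yields $W_\infty\in\calC$. The crucial step is the weak continuity $\calP\at{W_n}\to\calP\at{W_\infty}$. For $L<\infty$, Lemma~\ref{Lem.PropsA} asserts that each $\calA_\xi$ is compact on $\fspaceL^2\at{I_L}$, so $\calA_\xi W_n\to\calA_\xi W_\infty$ strongly for every $\xi$. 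Combined with the continuity of $\Phi$, the uniform pointwise bound $\nabs{\calA_\xi W_n\at{x}}\leq \xi^{1/2}\sqrt{2K}$, and Lebesgue's dominated convergence theorem applied successively to the $x$-integral on the bounded interval $I_L$ and to the $\xi$-integral (the latter dominating function being integrable thanks to Assumption~\ref{Ass.AdmissiblePhi}), this gives the desired convergence. The discrete-coupling case is analogous but easier since only finitely many compact operators $\calA_{\xi_m}$ are involved.

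The main obstacle is the possible loss of norm under weak convergence, as a priori only $\nnorm{W_\infty}_2^2\leq 2K$ is guaranteed. This is resolved using the super-quadraticity of $\calP$ stated in \eqref{Lem:PotentialEnergy.E1}. Since $\calP\at{W_\infty}=P\at{K}>0$ forces $W_\infty\neq 0$, the rescaling factor $\la\deq\sqrt{2K}/\nnorm{W_\infty}_2\geq 1$ is well-defined. If $\la>1$ were strict, then $\la W_\infty\in\calC_K$ and \eqref{Lem:PotentialEnergy.E1} would give $\calP\at{\la W_\infty}\geq \la^2 P\at{K}>P\at{K}$, contradicting the definition of $P\at{K}$. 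Hence $\nnorm{W_\infty}_2^2=2K$, and weak convergence with matching norms in the Hilbert space $\fspaceL^2\at{I_L}$ upgrades to strong convergence $W_n\to W_\infty$. In particular, $W_\infty\in\calC_K$ is the desired maximizer.
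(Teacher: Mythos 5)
Your proposal is correct and follows essentially the same route as the paper: the direct method with weak continuity of $\calP$ obtained from the compactness of $\calA_\xi$, a uniform pointwise bound, and dominated convergence via the integrability hypothesis of Assumption~\ref{Ass.AdmissiblePhi}, followed by the same rescaling argument based on \eqref{Lem:PotentialEnergy.E1} to rule out loss of norm. Your explicit preliminary checks (nonemptiness of $\calC_K$, finiteness of $P\at{K}$) and the final upgrade to strong convergence are welcome additions that the paper leaves implicit, though note that the nonemptiness of $\calC_K$ genuinely requires that $\Phi$ not vanish on too large a neighbourhood of the origin relative to $K$, a point neither you nor the paper fully resolves.
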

\begin{proof}
Since the discrete coupling-case is rather simple, we discuss the continuous-coupling case only.
\par
\emph{\ul{Weak continuity of $\calP$}}: Suppose that $\at{W_n}_{n\in\Nset}\subset\calC_K$ converges weakly to $W_\infty$ and notice that the properties of convolution operators imply the pointwise convergence $V_n\pair{x}{\xi}\to V_\infty\pair{x}{\xi}$ for any $x\in I_L$ and any $\xi$ with
\begin{align*}
V_n\pair{x}{\xi}:=\al\at{\xi}\Phi\bat{\beta\at\xi \at{\calA_\xi W_n}\at{x}}\,.
\end{align*}
By \eqref{Eqn:PropertiesPhi} and Lemma \ref{Lem.PropsA} we also have
\begin{align*}
\babs{ \at{\calA_\xi W_n}\at{x}
}\leq \xi^{1/2}\norm{W_n}_2
\end{align*}
as well as
\begin{align*}
0\leq V_n\pair{x}{\xi}\leq \tfrac12 \al\at\xi \phi\bat{\xi^{1/2}\sqrt{2K}}\Bat{\beta\at\xi \at{\calA_\xi W_n}\at{x}}^2\leq
\xi\al\at\xi \beta^2\at\xi \phi\bat{\xi^{1/2}\sqrt{2K}} K\,,
\end{align*}
and since the right hand side is an integrable majorant on $I_L\times \cointerval{0}{\infty}$ according to Assumption \ref{Ass.AdmissiblePhi}, we conclude that $\calP\at{W_n}\to\calP\at{W_\infty}$.
\par
\emph{\ul{Direct Method}}: Now let $\at{W_n}_{n\in\Nset}\subset\calC_K$ be a maximizing sequence for \eqref{Eqn:OptProblem}. By weak compactness we can assume that $W_n$ converges weakly to $W_\infty$ and find
\begin{align*}
W_\infty\in\calC\,,\qquad
\norm{W_\infty}_2\leq \sqrt{2K}\,, \qquad \calP\at{W_\infty}= P\at{K}>0\,.
\end{align*}
Since \eqref{Lem:PotentialEnergy.E1} implies
\begin{align*}
P\at{K}\geq \calP\at{\frac{\sqrt{2K}}{\norm{W_\infty}_2}W_\infty}\geq \frac{2K}{\norm{W_\infty}_2^2}\calP\at{W_\infty}
\end{align*}
we finally conclude that $\norm{W_\infty}_2=\sqrt{2K}$, i.e. $W_\infty$ belongs  to $\calC_K$. The assertion now follows from Corollary \ref{Cor:MaximizerAreWaves}.
\end{proof}
\begin{figure}[t!]%
\centering{%
\includegraphics[width=.9\textwidth]{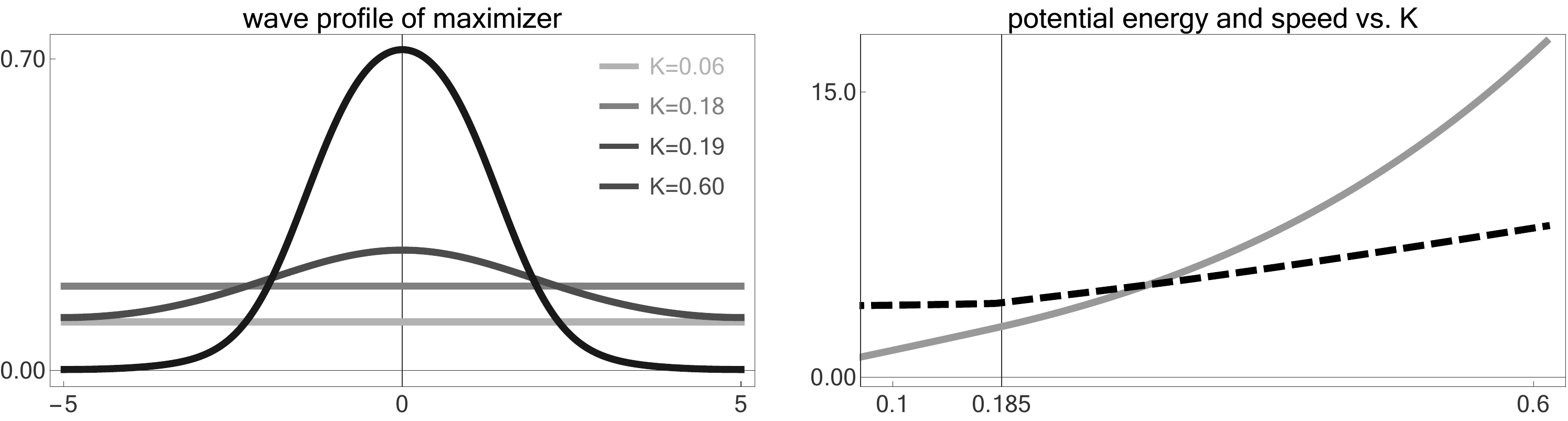}%
}%
\caption{%
\emph{Left panel:} Numerical wave profiles for the lattice \eqref{Eqn.PotThreshold} and several values of $K$, computed with the improvement dynamics from very localized initial data. The numerical maximizer is basically a constant for $K\lessapprox0.185$ but clearly localized for $K\gtrapprox0.185$.
\emph{Right panel}: Numerical values of the potential energy (gray, solid) and the speed (black, dashed) of the maximizer. %
}%
\label{fig:threshold}%
\end{figure}%
\begin{figure}[t!]%
\centering{%
\includegraphics[width=.9\textwidth]{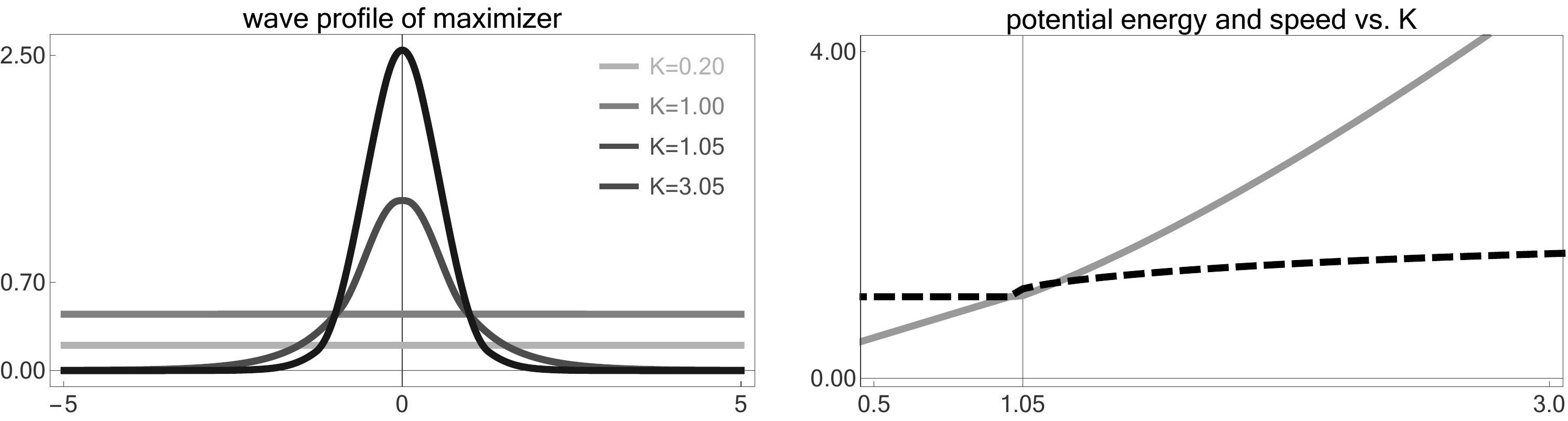}%
}%
\caption{%
Numerical simulations for the chain in \eqref{Eqn.PWLin} with localization threshold near $K=1.05$. In this example, the potential is piecewise quadratic.%
}%
\label{fig:pwlin}%
\end{figure}%
\BMHR The arguments in the proofs of Lemma \ref{Prop:ImprovementDynamics} and Lemma \ref{Lem:PeriodicMaximizers} can also be used to show that any discrete orbit of the improvement dynamics \eqref{Eqn:ImprovementDynamics} contains a strongly convergent subsequence and that any accumulation point must be a traveling wave. In particular, the estimate \eqref{Eqn:ImprovementEstimate} can be viewed as a discrete analogue to LaSalle's invariance principle. \EMHR It remains open under which conditions the optimization problem \eqref{Eqn:OptProblem} has a unique solution and whether there exists further local maxima or unstable saddle points, see also the discussion in \S\ref{sect:Intro}.
\par
We \BMHR further \EMHR emphasize that Lemma \ref{Lem:PeriodicMaximizers} does not automatically imply that the maximizer has a non-constant profile function.  For purely harmonic potentials one can show by means of Fourier transform --- see the discussion in \S\ref{sect:Aympt.Harmonic} --- that the periodic maximizer is always constant and in numerical simulations with super-quadratic functions that grow rather \BMHC  weakly \EMHC  near the origin we observe that the maximizer is constant for small $K$ but non-constant for large $K$. A typical example with discrete coupling is presented in Figure \ref{fig:threshold} and relies on
\begin{align}
\label{Eqn.PotThreshold}
M=2\,,\qquad L=5\,,\qquad \Phi_1\at{r}=\Phi_2\at{r}=\tfrac12r^2+\tfrac16r^6\,,
\end{align}
i.e. on potentials with positive second but vanishing third derivative \BMHR at $r=0$. \EMHR Similar energetic localization thresholds can be found in \cite{Her10}  for FPUT chains and in \cite{Wei99} for coherent structures in other Hamiltonian systems. Below --- see the comments to Proposition \ref{Prop.SQCriterion} and Proposition \ref{Prop:ConvergencePeriodicWaves} --- we discuss \BMHR sufficient \EMHR conditions to guarantee that the maximizer is non-constant.
\par%
Another, more degenerate and less smooth example with localization threshold is presented in Figure \ref{fig:pwlin} and concerns the chain
\begin{align}
\label{Eqn.PWLin}
M=1\,,\qquad L=4\,,\qquad \Phi\at{r}=\left\{\begin{array}{lcl}\D\tfrac12r^2&&\text{for $0\leq r\leq1$}\\\D
r-\tfrac12+\tfrac52\at{r-1}^2&&\text{for $r>1$}
\end{array}\right.
\end{align}
with piecewise linear stress strain relation $\Phi^\prime$, for which explicit approximation formulas have been derived in \cite{TV14}.
\par
We finally mention that there is no localization threshold in Hertzian chains with
monomial interaction potential due to an extra scaling symmetry. We refer to
Figure \ref{fig:hertzian} for a numerical example with
\begin{align}
\label{Eqn.PotHertzian}
M=1\,,\qquad L=4\,,\qquad \Phi\at{r}=r^p\,,
\end{align}
and to \cite{EP05,Jam12,JP14} for a variant of the corresponding improvement dynamics, the  \BMHC  super-exponential \EMHC  decay of the wave profiles, and asymptotic results for $p\gtrapprox 1$.
\begin{figure}%
\centering{%
\includegraphics[width=.9\textwidth]{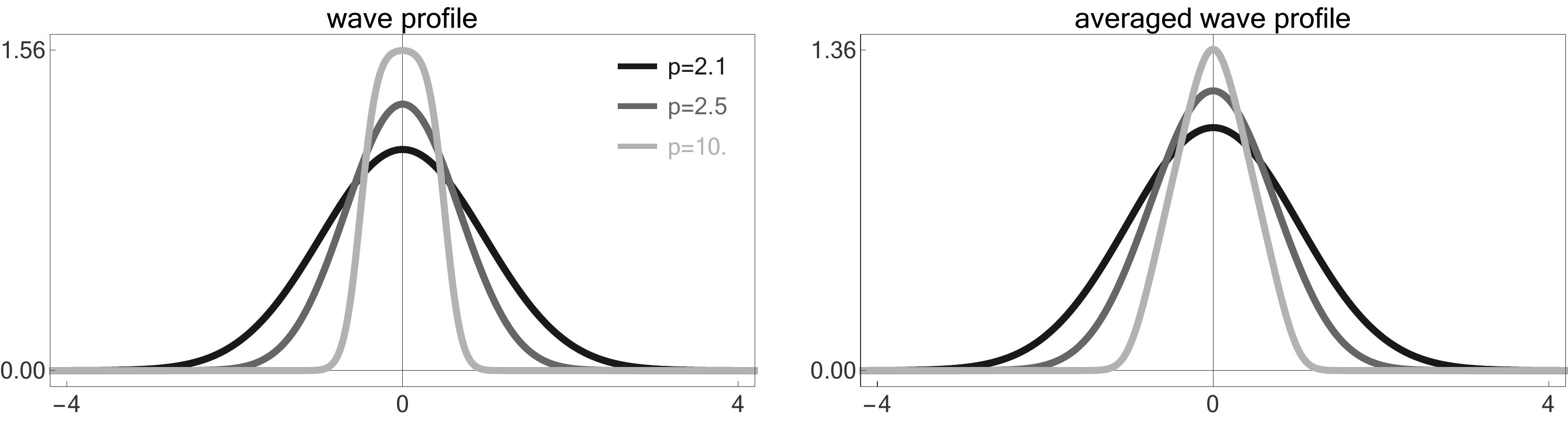}%
}%
\caption{%
\emph{Left panel:} Wave profile $W$ for the Hertzian chain \eqref{Eqn.PotHertzian} with $K=1$ and several values of the monomial exponent $p$.
\emph{Right panel}: The corresponding distance profile $\calA_1W$. %
}%
\label{fig:hertzian}%
\end{figure}%
%
%
\subsection{Solitary waves and genuine super-quadraticity}
\label{sect:Superquadraticity} 
Solitary traveling waves do not exist in the harmonic case in which the involved potentials are purely quadratic. \BMHC This follows by Fourier analysis arguments as described \EMHC \BMHR in \S\ref{sect:AsympLin}, which reveal that the linear variant of the operator $\partial\calP$  has no proper eigenfunctions but only continuous spectrum. In the variational setting, the non-existence of harmonic solitary waves becomes manifests in the \EMHR lack of strong compactness for maximizing sequences.  The usual strategy for both FPUT chains and peridynamical media is to \BMHC require \EMHC  the potentials to grow \BMHR super-quadratically \EMHR in a proper sense and to exploit concentration compactness arguments to exclude the crucial vanishing and splitting scenarios for weakly convergent sequences.  We follow a similar approach but the compactness conditions simplify since we work with unimodal functions.
\par
In what follows we call the optimization problem \eqref{Eqn:OptProblem} \emph{genuinely super-quadratic} if
\begin{align}
\label{Eqn:SQCond}
P\at{K}>Q\at{K}\,.
\end{align}
Here, $P\at{K}$ is defined in \eqref{Eqn:DefPK} and
\begin{align*}
Q\at{K}:=\sup\big\{\calQ\at{W}\;:\,W\in\calC_K\big\}
\end{align*}
with
\begin{align*}
\calQ\at{W}=\sum_{m=1}^{M}\tfrac12\Phi^{\prime\prime}_m\at{0}\int\limits_{I_L}\bat{\calA_{\xi_m} W}^2\dint{x}\qquad \text{resp.}\qquad
\calQ\at{W}=\tfrac12 \Phi^{\prime\prime}\at{0}\int\limits_0^\infty\al\at\xi\beta^2\at\xi\int\limits_{I_L}\bat{\calA_\xi W}^2\dint{x}\dint\xi
\end{align*}
represents the harmonic analogue to  \eqref{Eqn:OptProblem}. Notice that \eqref{Lem:PotentialEnergy.E1} combined with the $2$-homogeneity of $\calQ$ ensures that if \eqref{Eqn:SQCond} is satisfied for some $K=K_*$, then it also holds for  all $K\geq K_*$.
\par
We first show that \eqref{Eqn:SQCond} guarantees the existence of solitary waves and identify afterwards \BMHC sufficient \EMHC  conditions for the nonlinear potential functions. Similar ideas have been used in \cite{Her10} in the context of FPUT chains.
\begin{theorem}[strong compactness of maximizing sequences]
\label{Prop:SolitaryCompactness}
Let $L=\infty$ and suppose that $K$ satisfies \eqref{Eqn:SQCond}. Then, any sequence $\at{W_n}_{n\in\Nset}\subset\calC_K$ with $ \calP\at{W_n}\to P\at{K}$ admits a strongly convergent subsequence.
\end{theorem}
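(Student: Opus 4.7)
The plan is a concentration-compactness argument tailored to the cone $\calC$ of unimodal nonnegative even functions, in which the super-quadraticity condition \eqref{Eqn:SQCond} is used to exclude any loss of $\fspaceL^2$-mass in the weak limit of a maximizing sequence. Starting from $\at{W_n}\subset\calC_K$ with $\calP\at{W_n}\to P\at{K}$, I would extract a subsequence $W_n\rightharpoonup W_\infty$ weakly in $\fspaceL^2\at{\Rset}$. The unimodal decay \eqref{Eqn:UnimodalDecay} makes each $W_n|_{\cointerval{0}{\infty}}$ non-increasing and uniformly bounded on compact subsets of $\oointerval{0}{\infty}$, so Helly's selection theorem allows a further refinement with pointwise a.e.\ convergence; weak closedness of $\calC$ forces $W_\infty\in\calC$, and Fatou's inequality yields $\kappa\deq\norm{W_\infty}_2^2\leq 2K$. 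If $\kappa=2K$ then the Radon-Riesz property (weak convergence plus norm convergence) upgrades to strong $\fspaceL^2$-convergence, so the entire task is to prove $\kappa=2K$.

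Arguing by contradiction I suppose $\kappa<2K$ and set $V_n\deq W_n-W_\infty$, so that $V_n\rightharpoonup 0$, $V_n\to 0$ almost everywhere, and $\norm{V_n}_2^2\to 2K-\kappa>0$ by the Brezis-Lieb identity for the $\fspaceL^2$-norm. The pointwise convergence $\at{\calA_\xi V_n}\at{x}\to 0$ — obtained by testing $V_n\rightharpoonup 0$ against $\chi_\xi\at{\cdot-x}\in\fspaceL^2\at{\Rset}$ — combined with the bound $\norm{\calA_\xi W}_\infty\leq\xi^{1/2}\norm{W}_2$ from Lemma \ref{Lem.PropsA}, the transferred tail estimate $\nabs{V_n\at{x}}\leq W_n\at{x}+W_\infty\at{x}\leq\sqrt{2K/\nabs{x}}$, and the integrability in Assumption \ref{Ass.AdmissiblePhi} drive dominated convergence arguments that establish the Brezis-Lieb-type splitting
\begin{align*}
\calP\at{W_n}=\calP\at{W_\infty}+\calP\at{V_n}+\Do{1}
\end{align*}
together with the residual linearisation
\begin{align*}
\calP\at{V_n}=\calQ\at{V_n}+\Do{1}\,,
\end{align*}
the latter exploiting the Taylor expansion $\Phi\at{r}=\tfrac12\Phi^{\prime\prime}\at{0}r^2+\Do{r^2}$ near $r=0$. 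The Fourier representation of $\calQ$ together with $\babs{\mathrm{sinc}}\leq 1$ delivers the sharp bound $\calQ\at{V}\leq \tfrac{Q\at{K}}{2K}\norm{V}_2^2$ for every $V\in\fspaceL^2\at{\Rset}$, where the factor $Q\at{K}/\at{2K}$ is independent of $K$ by the $2$-homogeneity of $\calQ$; combined with the trivial bound $\calP\at{W_\infty}\leq P\at{\kappa/2}$ one arrives at
\begin{align*}
P\at{K}\leq P\at{\kappa/2}+Q\at{K}-Q\at{\kappa/2}\,.
\end{align*}

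To close the argument I would invoke the super-quadraticity from Lemma \ref{Lem:PotentialEnergy}, which shows that $K\mapsto P\at{K}/K$ is non-decreasing, while the $2$-homogeneity of $\calQ$ makes $K\mapsto Q\at{K}/K$ constant. Hence $K\mapsto\at{P\at{K}-Q\at{K}}/K$ is non-decreasing and strictly positive at the given $K$ by \eqref{Eqn:SQCond}. Applied to the pair $\kappa/2<K$ this forces $P\at{\kappa/2}-Q\at{\kappa/2}\leq\tfrac{\kappa}{2K}\at{P\at{K}-Q\at{K}}<P\at{K}-Q\at{K}$, directly contradicting the displayed inequality and yielding $\kappa=2K$. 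The main obstacle is justifying the two Brezis-Lieb-type identities on the unbounded $x$-domain $\Rset$: because the nonlocal operator $\calA_\xi$ does not compactify on the whole line and because $V_n$ inherits no unimodal rearrangement monotonicity, one has to engineer dominating functions that are jointly integrable in $x$ and $\xi$. Here the interplay between the transferred unimodal decay, the Cauchy-Schwarz bound $\at{\calA_\xi V_n}^2\at{x}\leq \xi\,\bat{\chi_\xi\ast V_n^2}\at{x}$, and the structural hypotheses on $\Phi$, $\al$, $\be$ in Assumption \ref{Ass.AdmissiblePhi} becomes decisive, in the spirit of the FPUT calculus of \cite{Her10}.
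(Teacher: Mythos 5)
Your argument is correct in outline, but it is a genuinely different route from the paper's. The paper never forms the difference $V_n=W_n-W_\infty$: it truncates $W_n$ in physical space at a radius $X$, writes $W_n=\widetilde W_n+\ol W_n$, proves the one-sided estimate $\calP\at{W_n}\leq\calP\bat{\widetilde W_n}+\calQ\bat{\ol W_n}+o_X\at{1}$ by exploiting the unimodal decay \eqref{Eqn:UnimodalDecay} on the annulus $\abs{\abs{x}-X}\leq\xi/2$ and on the tail, uses compactness of $\calA_\xi$ on the bounded piece, and then extracts a uniform tightness estimate $\limsup_n\nnorm{\ol W_n}_2^2=o_X\at{1}$ from the scaling bounds $\calP\bat{\widetilde W_\infty}\leq\tfrac{\nnorm{\widetilde W_\infty}_2^2}{2K}P\at{K}$ and $\calQ\bat{\ol W_n}\leq\tfrac{\nnorm{\ol W_n}_2^2}{2K}Q\at{K}$. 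You instead run a Brezis--Lieb splitting $\calP\at{W_n}=\calP\at{W_\infty}+\calP\at{V_n}+\Do{1}$, linearize the escaping part via $\calP\at{V_n}=\calQ\at{V_n}+\Do{1}$, and close with strict monotonicity of $K\mapsto\bat{P\at{K}-Q\at{K}}/K$. Both are concentration-compactness arguments and both ultimately rest on the same two structural facts (the tail of a unimodal $\fspaceL^2$-function is pointwise small, and $P>Q$ penalizes escaping mass); the paper's version avoids any Brezis--Lieb machinery and only needs one-sided inequalities, whereas yours yields the sharper asymptotic identity and a cleaner dichotomy. Two points in your sketch deserve explicit attention. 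First, the residual linearization is \emph{not} a dominated-convergence statement in $x$ (the natural majorant $C\at{\calA_\xi V_n}^2$ has non-vanishing integral); what makes it work is the uniform estimate $\nnorm{\calA_\xi V_n}_\infty\to0$, which you can get by combining the equicontinuity of $x\mapsto\at{\calA_\xi V_n}\at{x}$ with pointwise convergence on compact sets and the transferred tail bound $\nabs{V_n}\leq W_n+W_\infty$ outside them; you list exactly these ingredients but should make the uniform convergence the explicit pivot. Second, your bound $\calQ\at{V}\leq\tfrac{Q\at{K}}{2K}\norm{V}_2^2$ for \emph{arbitrary} $V\in\fspaceL^2\at\Rset$ requires knowing that the supremum of $\calQ$ over the cone $\calC_K$ coincides with the unconstrained supremum over the sphere, i.e.\ $Q\at{K}=\tfrac12\Phi^{\prime\prime}\at{0}\Theta^2\at{0}\cdot2K$; this is true because $\Theta^2$ is maximized at $k=0$ and the maximum is attained in the limit by spreading unimodal profiles (as in the proof of Proposition \ref{Prop.SQCriterion}), but it is a small lemma you must supply rather than a consequence of $2$-homogeneity alone.
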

\begin{proof}
\emph{\ul{Preliminaries}}:
To elucidate the key ideas, we start with the discrete-coupling case \eqref{Case:DiscreteCoupling} and discuss the necessary modification for \eqref{Case:ContinousCoupling} afterwards. Passing to a (not relabeled) subsequence we can assume that
\begin{align*}
W_n\xrightarrow{n\to\infty}W_\infty\quad\text{weakly\;in\;$\fspaceL^2\at\Rset$}
\end{align*}
for some $W_\infty\in\calC$ with $\norm{W_\infty}_2^2\leq 2K$. Our goal is to show
\begin{align}
\label{Thm:SolitaryCompactness.PE1}
\norm{W_\infty}_2^2\geq 2K
\end{align}
because this implies in combination with the weak convergence the desired strong convergence in the Hilbert space $\fspaceL^2\at\Rset$.
\par
\emph{\ul{Truncation in $x$}}:
For given  $X>\xi_M$ we write
\begin{align*}
W_n=\widetilde{W}_n+
\ol{W}_n\qquad\text{with}\qquad \widetilde{W}_n:=\chi_{\ccinterval{-X}{+X}}W_n\,,\quad
\ol{W}_n:=\chi_{\Rset\setminus\ccinterval{-X}{+X}}W_n
\end{align*}
and observe that
\begin{align}
\label{Thm:SolitaryCompactness.PE2}
\bnorm{\widetilde{W}_n}_2^2+\bnorm{\ol{W}_n}_2^2=2K\,.
\end{align}
Moreover, \BMHR by \eqref{Eqn:EnergyFormulas1} \EMHR we have
\begin{align*}
\BMHR \calP\at{W_n}=\sum_{m=1}^M\calP_m\at{W_n}\EMHR
\end{align*}
\BMHR with \EMHR
\begin{align*}
\BMHR \calP_m\at{W_n}&:=
\int\limits_{x\in\Rset}\!\!\! \Phi_m\at{\calA_{\xi_m}W_n}\dint{x}
\\&= \EMHR
\int\limits_{\abs{x}\leq X-\xi_m/2}\!\!\! \Phi_m\at{\calA_{\xi_m}W_n}\dint{x}
\;\;+ \!\!\!\!\!\int\limits_{\abs{\abs{x}- X}\leq \xi_m/2} \!\!\!\Phi_m\at{\calA_{\xi_m}W_n}\dint{x}\;\;+\!\!\!\!\!
\int\limits_{\abs{x}\geq X+\xi_m/2} \!\!\!\Phi_m\at{\calA_{\xi_m}W_n}\dint{x} \,.
\end{align*}
The unimodality and the evenness of $\calA_{\xi_m}W_n$ combined with Lemma \ref{Lem.PropsA} and \eqref{Eqn:UnimodalDecay} imply
\begin{align*}
0\leq \int\limits_{\abs{\abs{x}- X}\leq \xi_m/2} \Phi_m\at{\calA_{\xi_m}W_n}\dint{x}\leq 2\xi_m \Phi_m\at{\frac{\xi_m\sqrt{2K}}{\sqrt{2\at{X-\xi_m/2}}}}\leq o_X\at{1}\,,
\end{align*}
where  $o_X\at{1}$ denotes a quantity that does not depend on $n$ \BMHR or $m$ \EMHR and becomes arbitrarily small as $X\to\infty$. In the same way we derive
\begin{align*}
0\leq \int\limits_{\abs{x}\leq X-\xi_m/2} \Phi_m\bat{\calA_{\xi_m}W}\dint{x}\leq  \int\limits_{\Rset} \Phi_m\bat{\calA_{\xi_m}\widetilde{W}_n}\dint{x}+o_X\at{1}=\BMHR \calP_m\bat{\widetilde{W}_n}\EMHR +o_X\at{1}
\end{align*}
as well as
\begin{align*}
0\leq \int\limits_{\abs{x}\geq X+\xi_m/2} \Phi_m\bat{\calA_{\xi_m}W}\dint{x}\leq \BMHR \calP_m\bat{\ol{W}_n}\EMHR +o_X\at{1}\leq \BMHR \calQ_m\bat{\ol{W}_n}\EMHR+o_X\at{1}
\end{align*}
due to the smoothness of $\Phi_m$ and since the convolution kernel corresponding to $\calA_{\xi_m}$ is supported in $\ccinterval{-\xi_m/2}{+\xi_m/2}$. In summary, we have shown that
\begin{align}
\label{Thm:SolitaryCompactness.PE3}
\calP\bat{W_n}\leq \calP\bat{\widetilde{W}_n}+\calQ\bat{\ol{W}_n}+o_X\at{1}
\end{align}
holds for any given $X>\xi_M$ and all $n\in\Nset$.
\par%
\emph{\ul{Further estimates}}:
By construction,  $\widetilde{W}_n$ converges weakly as $n\to\infty$ to $\widetilde{W}_\infty$ and $\calA_{\xi_m}\widetilde{W}_n$ converges (for every $m$) pointwise to $\calA_{\xi_m}\widetilde{W}_\infty$. Moreover, $\calA_{\xi_m}\widetilde{W}_n$ is pointwise bounded by $\sqrt{\xi_m}\sqrt{2K}$ and compactly supported in $\ccinterval{-X-\xi_M/2}{+X+\xi_M/2}$. We thus conclude that $A_{\xi_m}\widetilde{W}_n$ converges strongly in $\fspaceL^2\at{\Rset}$ to $\calA_{\xi_m}\widetilde{W}_\infty$ and this implies
\begin{align*}
\calP\bat{\widetilde{W}_n}\leq \calP\bat{\widetilde{W}_\infty}+o_n\at{1}\,,
\end{align*}
where $o_n\at{1}$ is allowed to depend on $X$ but vanishes in the limit $n\to\infty$. On the other hand, the super-quadraticity of $\calP$, see \eqref{Lem:PotentialEnergy.E1}, implies
\begin{align*}
\calP\bat{\widetilde{W}_\infty}\leq\frac{\bnorm{\widetilde{W}_\infty}_2^2}{2K}\calP\at{\frac{\sqrt{2K}}{\bnorm{\widetilde{W}_\infty}_2}\widetilde{W}_\infty}\leq
\frac{\bnorm{\widetilde{W}_\infty}_2^2}{2K} P\at{K}
\end{align*}
and in the same way we demonstrate
\begin{align*}
\calQ\bat{\ol{W}_n}\leq \frac{\bnorm{\ol{W}_n}_2^2}{2K}Q\at{K}\,.
\end{align*}
We thus deduce
\begin{align*}
\calP\at{W_n}\leq \frac{\bnorm{\widetilde{W}_\infty}_2^2}{2K} P\at{K}+
 \frac{\bnorm{\ol{W}_n}_2^2}{2K}Q\at{K}+o_X\at{1}+o_n\at{1}
\end{align*}
from \eqref{Thm:SolitaryCompactness.PE3}, and recalling that $\calP\at{W_n}\geq P\at{K}-o_n\at{1}$ holds  by construction, we arrive at
\begin{align}
\label{Thm:SolitaryCompactness.PE4}
P\at{K}+ \frac{\norm{\ol{W}_n}_2^2}{2K}\bat{P\at{K}-Q\at{K}}\leq \frac{\bnorm{\widetilde{W}_\infty}_2^2+\bnorm{\ol{W}_n}_2^2}{2K}P\at{K}+o_X\at{1}+o_n\at{1}
\end{align}
after writing $Q\at{K}=P\at{K}-\bat{P\at{K}-Q\at{K}}$ and rearranging terms.
\par
\emph{\ul{Justification of \eqref{Thm:SolitaryCompactness.PE1}}}:
Since the weak convergence $\widetilde{W}_n\to \widetilde{W}_\infty$ combined with \eqref{Thm:SolitaryCompactness.PE2} ensures
\begin{align*}
\frac{\bnorm{\widetilde{W}_\infty}_2^2+\bnorm{\ol{W}_n}_2^2}{2K}\leq 1+o_n\at{{1}}\,
\end{align*}
\BMHR we find the \EMHR uniform tightness estimate
\begin{align*}
\limsup_{n\to\infty}\norm{\ol{W}_n}_2^2=\frac{2K}{P\at{K}-Q\at{K}}o_X\at{1}=o_X\at{1}
\end{align*}
because letting $n\to\infty$ for fixed $X$ would otherwise produce a contradiction in \eqref{Thm:SolitaryCompactness.PE4}. Combining this with $\nnorm{\widetilde{W}_\infty}_2\leq \nnorm{W_\infty}_2$ and letting $n\to\infty$ we thus obtain
\begin{align*}
P\at{K}\leq \frac{\bnorm{W_\infty}_2^2}{2K}P\bat{K}+o_X\at{1}\,,
\end{align*}
and this yields the desired result in the limit $X\to\infty$.
\par
\emph{\ul{Concluding remarks}}:
\BMHR All arguments can be generalized to the continuous-coupling case provided that the truncation in $x$ is accompanied by an appropriate cut-off in $\xi$-space. More precisely, choosing two parameters $0<\Xi_1<\Xi_2<\infty$ we write
\begin{align*}
\calP\at{W_n}=\calP_{1}\at{W_n}+\calP_{2}\at{W_n}+\calP_{3}\at{W_n}\,,
\end{align*}
where the three contributions on the right hand side stem from splitting the $\xi$-integrating in \eqref{Eqn:EnergyFormulas2} into three integral corresponding to $\xi\in\oointerval{0}{\Xi_1}$, $\xi\in\oointerval{\Xi_1}{\Xi_2}$, and   $\xi\in\oointerval{\Xi_2}{\infty}$, respectively.
Using the identity $0\leq \Phi\at{r}\leq \tfrac12 \phi\at{r}r^2$, which follows from \eqref{Eqn:PropertiesPhi}, as well as Assumption \ref{Ass.AdmissiblePhi} and Lemma \ref{Lem.PropsA} we then estimate
\begin{align*}
0\leq \calP_1\at{W_n}&\leq
K\int\limits_{0}^{\Xi_1} \al\at\xi\beta^2\at\xi\phi\at{\xi^{1/2}\beta\at\xi \sqrt{2K}}\dint{\xi}=o_{\Xi_1}\at{1}\,,
\end{align*}
where $o_{\Xi_1}$ is independent of $n$ and vanishes for fixed $K$ in the limit $\Xi_1\to0$. Moreover, in the same way we verify
\begin{align*}
0\leq \calP_3\at{W_n}&\leq
K\int\limits_{\Xi_2}^\infty \al\at\xi\beta^2\at\xi\phi\at{\xi^{1/2}\beta\at\xi \sqrt{2K}}\dint{\xi}=o_{\Xi_2}\at{1}
\end{align*}
with $o_{\Xi_2}\at{1}\to0$ as $\Xi_2\to\infty$. To deal with the remaining
term $\calP_2\at{W_n}$, we split the $x$-integration as above and repeat all asymptotic arguments by using a combined error term $o_{X,\Xi_1,\Xi_2}\at{1}$ instead of $o_X\at{1}$. \EMHR
\end{proof}
\begin{corollary}[existence of solitary waves]
\label{Cor:ExistenceSolitaryWaves}
The constrained optimization problem \eqref{Eqn:OptProblem} admits for $L=\infty$ and any $K>0$ with \eqref{Eqn:SQCond} at least one solution $W\in\calC_K$, which provides a solitary traveling wave.
\end{corollary}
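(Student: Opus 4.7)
The plan is to apply the direct method of the calculus of variations, using Theorem \ref{Prop:SolitaryCompactness} as the compactness input that replaces the usual (and unavailable) weak continuity of $\calP$ on $\fspaceL^2\at{\Rset}$. First I would check that $\calC_K$ is non-empty so that $P\at{K}$ is a meaningful supremum: any nonzero unimodal bump $W\in\calC$ with $\calK\at{W}=K$ yields $\calA_{\xi_m}W\not\equiv 0$, hence $\calP\at{W}>0$ by the strict super-quadraticity of the micro-potentials, so $W\in\calC_K$. In particular $P\at{K}\geq Q\at{K}\geq 0$ and the assumption \eqref{Eqn:SQCond} guarantees that $P\at{K}$ is finite and strictly positive.

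Next I would pick a maximizing sequence $\at{W_n}_{n\in\Nset}\subset\calC_K$ with $\calP\at{W_n}\to P\at{K}$. Theorem \ref{Prop:SolitaryCompactness} provides a (not relabeled) subsequence converging strongly in $\fspaceL^2\at{\Rset}$ to some $W_\infty$. Because $\calC$ is closed under strong convergence and the norm is continuous, the limit satisfies $W_\infty\in\calC$ and $\calK\at{W_\infty}=K$. The strong continuity of $\calP$ on the cone $\calC$ established in Lemma~\ref{Lem:PotentialEnergy} then yields
\begin{align*}
\calP\at{W_\infty}=\lim_{n\to\infty}\calP\at{W_n}=P\at{K}>0,
\end{align*}
so $W_\infty\in\calC_K$ and $W_\infty$ solves the optimization problem \eqref{Eqn:OptProblem}.

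Finally, applying Corollary \ref{Cor:MaximizerAreWaves} to this maximizer gives a \BMHC Lagrange \EMHC multiplier $\si^2\geq K^{-1}P\at{K}>0$ such that $W_\infty$ satisfies the traveling wave equation \eqref{Eqn:TW2}, which by construction is a solitary profile because $L=\infty$ and $W_\infty\in\fspaceL^2\at{\Rset}\cap\calC$ (so \eqref{Eqn:UnimodalDecay} forces decay at $\pm\infty$). There is no real obstacle in this argument since the hard analytical work has been absorbed into Theorem \ref{Prop:SolitaryCompactness}; the only delicate point one must verify is that no mass of the maximizing sequence escapes to infinity, and this is precisely the content of the tightness estimate derived in that theorem via the genuine super-quadraticity condition \eqref{Eqn:SQCond}.
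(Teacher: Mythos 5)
Your proposal is correct and follows essentially the same route as the paper: the paper's own proof is just the one-line observation that Theorem \ref{Prop:SolitaryCompactness} yields a strongly convergent maximizing (sub)sequence whose limit is a maximizer (using the closedness of $\calC$ and the strong continuity of $\calP$ from Lemma \ref{Lem:PotentialEnergy}), and Corollary \ref{Cor:MaximizerAreWaves} then identifies it as a traveling wave. Your additional remarks on non-emptiness of $\calC_K$ and positivity of $P\at{K}$ are harmless elaborations; only note that finiteness of $P\at{K}$ really comes from the integrability bounds of Assumption \ref{Ass.AdmissiblePhi} rather than from \eqref{Eqn:SQCond} itself.
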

\begin{proof}
The existence of a maximizer is granted by Theorem \ref{Prop:SolitaryCompactness} and Corollary  \ref{Cor:MaximizerAreWaves} guarantees that this maximizer is a traveling wave.
\end{proof}
Notice that any solitary wave is expected to decay exponentially, where the heuristic decay rate $\la$ depends only on $\si$ and will be identified below in \eqref{Eqn:ExpDecay} using the imaginary variant of the underlying dispersion relation.  The exponential decay can also be deduced rigorously from the traveling wave equation and the properties of the convolution operator $\calA_\xi$, see \cite{HR10} for the details in an FPUT setting.
%
%
%
\subsection{Sufficient conditions for the existence of solitary waves} 
The natural strategy to verify \eqref{Eqn:SQCond} for sufficiently large $K$
is to fix a test profile $W$ and to show that the function
\begin{align*}
\lambda\mapsto \lambda^{-2}\calP\at{\la W}
\end{align*}
is unbounded as $\la\to\infty$.  A special application of this idea is the following result, which guarantees the existence of solitary waves for all $K>0$ under 
mild assumptions.  For simplicity we restrict our considerations to the case of continuous but finite-range  interactions and mention that similar results hold if $\xi\mapsto\xi^2 \al\at\xi\beta^2\at\xi$ decays sufficiently fast or if $\Phi_m^{\prime\prime\prime}\at{0}>0$ holds for some $m$ in the discrete-coupling case.
\begin{proposition}[criterion for genuine super-quadraticity]
\label{Prop.SQCriterion}
Condition \eqref{Eqn:SQCond} holds for $L=\infty$ in the continuous-coupling case provided that $\Phi^{\prime\prime\prime}\at{0}>0$ and $\supp\,{\al}\subset\ccinterval{0}{\Xi}$ for some $0<\Xi<\infty$.
\end{proposition}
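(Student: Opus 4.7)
The plan is to verify condition \eqref{Eqn:SQCond} by exhibiting an explicit test profile $W_\ast\in\calC_K$ with $\calP(W_\ast)>Q(K)$, for then $P(K)\geq\calP(W_\ast)>Q(K)$. I would take the spread-out box
\[
W_\ast(x)\deq A\,\chi_{\ccinterval{-L_0}{+L_0}}(x),\qquad A\deq\sqrt{K/L_0},
\]
which lies in $\calC_K$ for every $L_0>0$, and work in the regime $L_0\to\infty$. An elementary computation shows that $\calA_\xi W_\ast$ takes the plateau value $\xi A$ on $\abs{x}\leq L_0-\xi/2$, decays linearly across two boundary layers of width $\xi$, and vanishes for $\abs{x}\geq L_0+\xi/2$. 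Since $\supp\alpha\subset\ccinterval{0}{\Xi}$, the boundary layers contribute only an $O(1)$ error (uniform in $L_0$) to both $\calP(W_\ast)$ and $\calQ(W_\ast)$, while the plateau supplies the leading $O(L_0)$ behaviour.

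For the quadratic part, the Fourier symbol $\xi\,\mathrm{sinc}(k\xi/2)$ from Lemma~\ref{Lem.PropsA} together with $\abs{\mathrm{sinc}}\leq 1$ yields the a~priori bound
\[
Q(K)\leq\Phi''(0)\omega_0^2 K,\qquad \omega_0^2\deq\int_0^\Xi\alpha(\xi)\beta^2(\xi)\xi^2\dint\xi,
\]
with $\omega_0^2<\infty$ by Assumption~\ref{Ass.AdmissiblePhi} combined with $\phi\geq\Phi''(0)$. Integrating $(\calA_\xi W_\ast)^2$ in closed form produces $\calQ(W_\ast)=\Phi''(0)\omega_0^2 K-\DO{L_0^{-1}}$, hence the deficit $Q(K)-\calQ(W_\ast)=\DO{L_0^{-1}}$. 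For $\calP$, Taylor's theorem with $\Phi'''(0)>0$ produces $r_0,c>0$ such that $\Phi(r)\geq\tfrac12\Phi''(0)r^2+c r^3\mathbf{1}_{\{r\leq r_0\}}$ for all $r\geq0$, and dropping the nonnegative boundary contribution yields
\[
\calP(W_\ast)\geq(2L_0-\Xi)\int_0^\Xi\alpha(\xi)\Phi\bat{\beta(\xi)\xi A}\dint\xi
\geq\calQ(W_\ast)+2cK^{3/2}L_0^{-1/2}\,I_3(A)-\DO{L_0^{-1}},
\]
where $I_3(A)\deq\int_0^\Xi\alpha\beta^3\xi^3\,\mathbf{1}_{\{\beta\xi A\leq r_0\}}\dint\xi$. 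Combining the two estimates,
\[
\calP(W_\ast)-Q(K)\geq 2cK^{3/2}\,I_3(A)\,L_0^{-1/2}-\DO{L_0^{-1}}.
\]

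The proof thus reduces to showing that $I_3(A)$ stays bounded below by a positive constant as $L_0\to\infty$ (equivalently $A\to 0$). By monotone convergence, $I_3(A)\uparrow\int_0^\Xi\alpha\beta^3\xi^3\dint\xi\in(0,\infty]$ as $A\to 0$, since $\beta$ is almost-everywhere finite on the compact set $\supp\alpha$; hence the $L_0^{-1/2}$ gain dominates the $L_0^{-1}$ deficit for $L_0$ sufficiently large and $\calP(W_\ast)>Q(K)$ follows. The main technical subtlety is precisely this cubic integral: $\int\alpha\beta^3\xi^3\dint\xi$ need not be finite under Assumption~\ref{Ass.AdmissiblePhi} alone, and the indicator $\mathbf{1}_{\{\beta\xi A\leq r_0\}}$---which only strengthens the lower bound on $\calP$---together with the finite range of $\alpha$ both keeps $I_3(A)\leq r_0\omega_0^2/A$ finite at every positive $A$ and guarantees its uniform positivity in the small-$A$ limit.
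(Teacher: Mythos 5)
Your proposal is correct and follows essentially the same route as the paper: the same rescaled box profile $W_\Lambda=\sqrt{K/\Lambda}\,\chi_{I_\Lambda}$, the same Plancherel/$\mathrm{sinc}$ upper bound on $Q\at{K}$ shown to be saturated up to an $\DO{\Lambda^{-1}}$ deficit, and the same cubic Taylor term yielding an $\DO{\Lambda^{-1/2}}$ gain that dominates for large $\Lambda$. Your truncation $\mathbf{1}_{\{\beta\xi A\leq r_0\}}$ inside the cubic integral is a slightly more careful treatment of the possible non-integrability of $\xi^3\al\be^3$ than the paper's implicit assumption, but the argument is otherwise identical.
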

\begin{proof}
Using Plancharel's theorem and the properties of the $\mathrm{sinc}$ function we estimate
\begin{align}
\label{Prop.SQCriterion.PE1}
\begin{split}
Q\at{K}&=\sup_{W\in\calC_K}\tfrac{1}{2}\Phi^{\prime\prime}\at{0}
\int\limits_0^{\Xi}\xi^2\al\at\xi\be^2\at{\xi}\int\limits_{\Rset}\mathrm{sinc}^2\bat{k\xi/2}\babs{\widehat{W}\at{k}}^2\dint{k}\dint\xi
\\&\leq\Phi^{\prime\prime}\at{0} K
\int\limits_0^{\Xi}\xi^2\al\at\xi\be^2\at{\xi}\dint\xi\,,
\end{split}
\end{align}
and define for any large $\Lambda$ a piecewise constant function  $W_\Lambda\in\calC$ by
\begin{align*}
W_\Lambda\at{x}=\sqrt{\frac{K}{\Lambda}}\chi_{I_\Lambda}\at{x}\,.
\end{align*}
Since the unimodal function $\calA_\xi W_\Lambda$ attains the value $\xi K^{1/2}\Lambda^{-1/2}$ for $\abs{x}\leq \Lambda-\xi/2$ and vanishes for $\abs{x}\geq \Lambda+\xi/2$, we arrive at the lower bound
\begin{align*}
\calQ\at{W_\Lambda}\geq
\tfrac{1}{2}\Phi^{\prime\prime}\at{0}
\int\limits_0^{\Xi}\al\at\xi\be^2\at{\xi}\int\limits_{-\Lambda+\xi/2}^{+\Lambda-\xi/2}
\xi^2 K \Lambda^{-1}\dint{x}\dint{\xi}
\end{align*}
and conclude with $\Lambda\to\infty$ that the upper bound in \eqref{Prop.SQCriterion.PE1} is actually sharp. Consequently, we have
\begin{align*}
\calQ\at{W_\Lambda}\geq Q\at{K}-C\Lambda^{-1}
\end{align*}
for some positive constant $C$ independent of $\Lambda$. Moreover, using the smoothness of $\Phi$ and thanks to Lemma \ref{Lem.PropsA} we estimate
\begin{align*}
\calP\at{W_\Lambda}-\calQ\at{W_\Lambda}&\geq
\at{1-o_\Lambda\at{1}} \tfrac16 \Phi^{\prime\prime\prime}\at{0} \int\limits_{0}^{\Xi}
\al\at\xi\be^3\at\xi\int\limits_{-\Lambda+\xi/2}^{+\Lambda-\xi/2}\bat{\at{\calA_\xi W_\Lambda}\at{x}}^3\dint{x}\dint\xi
\\&\geq
\at{1-o_\Lambda\at{1}} \tfrac16 \Phi^{\prime\prime\prime}\at{0} \int\limits_{0}^{\Xi}
\xi^3\al\at\xi\be^3\at\xi \frac{K^{3/2}\at{2\BMHC \Lambda \EMHC-\xi}}{\Lambda^{3/2}} \dint\xi\
\,,
\end{align*}
where the small quantity $o_\Lambda\at{1}$ is chosen such that
\begin{align*}
\sup_{0\leq\xi\leq\Xi}\;\sup_{0\leq r\leq \xi^{1/2}\be\at{\xi}\sqrt{2K}}\abs{\frac{\Phi^{\prime\prime\prime}\at{r}}{\Phi^{\prime\prime\prime}\at{0}}}\geq 1-o_\Lambda\at{1}\,.
\end{align*}
In summary, we obtain
\begin{align*}
P\at{K}\geq  \calP\at{W_\Lambda}-\calQ\at{W_\Lambda}+\calQ\at{W_\Lambda}\geq Q\at{K}-C\Lambda^{-1}+c \Lambda^{-1/2}
\end{align*}
for every large $\Lambda$ and some constant \BMHR $c>0$, \EMHR so the claim follows by choosing $\Lambda$ sufficiently large.
\end{proof}
Similar ideas reveal for large $L$ and $K$ that the constant function cannot be the maximizer for problem \eqref{Eqn:OptProblem} provided that $\Phi$ grows sufficiently fast. In fact, we have
\begin{align}
\label{Prop.SQCriterion.B1}
\calP\at{K^{1/2}\Lambda^{-1/2}}=2L  \int\limits_0^{\Xi}\Phi\at{\beta\at{\xi}\xi  K^{1/2}\Lambda^{-1/2}}\dint\xi
\end{align}
and verify for $L>\Lambda+\Xi$ the estimate
\begin{align}
\label{Prop.SQCriterion.B2}
\calP\at{W_\Lambda}&\geq \at{2M-\Xi} \int\limits_0^{\Xi}\al\at\xi\Phi\at{\beta\at{\xi}\xi  K^{1/2}\Lambda^{-1/2}}\dint\xi\,.
\end{align}
The right hands side in \eqref{Prop.SQCriterion.B2}, however, is typically larger than that from \eqref{Prop.SQCriterion.B1} provided that $r^{-2}\Phi\at{r}\to\infty$ as $r\to\infty$.
%
%
%
\subsection{Convergence of periodic waves}
\label{sect:ConvergencePeriodicWaves}
%
We finally show that the solitary waves provided by Corollary \ref{Cor:ExistenceSolitaryWaves} can be approximated by periodic ones. To this end we write the $L$-dependence of $P$ and $\calP$ explicitly.
\begin{proposition}[convergence of maximizers]
\label{Prop:ConvergencePeriodicWaves}
Suppose that \eqref{Eqn:SQCond} is satisfied. Then we have
\begin{align}
\label{Prop:ConvergencePeriodicWaves.E1}
P_L\at{K}\quad\xrightarrow{L\to\infty}\quad P_\infty\at{K}\,.
\end{align}
Moreover, any family $\at{W_L}_{L\geq 1}$ with $W_L$ being a maximizer of problem \eqref{Eqn:OptProblem} on $I_L$ converges along subsequences and in a strong $\fspaceL^2$-sense  to a maximizer $W_\infty$ of problem \eqref{Eqn:OptProblem} on $I_\infty=\Rset$.
\end{proposition}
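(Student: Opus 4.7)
The plan is to sandwich $P_L\at{K}$ between two asymptotic bounds that both converge to $P_\infty\at{K}$ and then to combine this with the concentration-compactness of Theorem \ref{Prop:SolitaryCompactness} to extract strongly convergent subsequences of maximizers. The bridge between the periodic and the solitary setting will be the truncation $V_L\at{x}:=\chi_{I_L}\at{x}W_L\at{x}$, which reinterprets a $2L$-periodic maximizer as a function on the whole real line. Since $W_L$ is even, non-negative and unimodal on $I_L$, the function $V_L$ lies in the cone $\calC$ on $\Rset$, and $\nnorm{V_L}_2^2=2K$ by construction, so $V_L\in\calC_K$ and in particular $\calP_\infty\at{V_L}\leq P_\infty\at{K}$.

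For the lower bound $\liminf_{L\to\infty}P_L\at{K}\geq P_\infty\at{K}$, I would start from a solitary maximizer $W_\infty\in\calC_K$ on $\Rset$, available by Corollary \ref{Cor:ExistenceSolitaryWaves}, and produce a periodic test profile $W_L^{\per}$ by truncating $\tilde W_L:=\chi_{\ccinterval{-L/2}{+L/2}}W_\infty$, rescaling to restore the constraint $\calK=K$, and extending $2L$-periodically. For any fixed $\xi$ and sufficiently large $L$ the convolution window $\ccinterval{x-\xi/2}{x+\xi/2}$ does not feel the periodicity, so $\calA_\xi W_L^{\per}$ on $I_L$ coincides with the $\Rset$-convolution of the truncated and rescaled profile. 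Dominated convergence, using the $\xi$-integrability in Assumption \ref{Ass.AdmissiblePhi} for the tail $\xi\to\infty$ and the growth estimates from Lemma \ref{Lem:PotentialEnergy}, then yields $\calP_L\at{W_L^{\per}}\to\calP_\infty\at{W_\infty}=P_\infty\at{K}$.

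For the matching upper bound $\limsup_{L\to\infty}P_L\at{K}\leq P_\infty\at{K}$, I compare $\calP_L\at{W_L}$ with $\calP_\infty\at{V_L}$ pointwise in $\pair{x}{\xi}$. On the bulk region $\abs{x}\leq L-\xi/2$ the convolutions $\calA_\xi W_L$ and $\calA_\xi V_L$ coincide, while on the wrap-around strip $\abs{x}>L-\xi/2$ their difference picks up values of $W_L$ on a segment of length at most $\xi$ taken near the minimum point $\pm L$. By the unimodal decay estimate \eqref{Eqn:UnimodalDecay}, adapted to the $I_L$-norm, these values are bounded by $\sqrt{K/\at{L-\xi/2}}$, which vanishes for fixed $\xi$ as $L\to\infty$. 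Splitting the $\xi$-integration at a large threshold $\Xi$ and invoking Assumption \ref{Ass.AdmissiblePhi} to control the tail uniformly in $L$ gives $\calP_L\at{W_L}\leq\calP_\infty\at{V_L}+o_L\at{1}$, which closes the sandwich and establishes \eqref{Prop:ConvergencePeriodicWaves.E1}. Once this is known, the same comparison turns $\at{V_L}_{L\geq 1}$ into a maximizing sequence for the solitary problem, since $\calP_\infty\at{V_L}\geq P_L\at{K}-o_L\at{1}\to P_\infty\at{K}$. Theorem \ref{Prop:SolitaryCompactness} then supplies a subsequence converging strongly in $\fspaceL^2\at\Rset$ to some $W_\infty\in\calC_K$ with $\calP_\infty\at{W_\infty}=P_\infty\at{K}$, and Corollary \ref{Cor:MaximizerAreWaves} identifies this limit as a solitary traveling wave.

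The hard part will be the wrap-around step in the upper bound: one has to make the cancellation between the periodic and the truncated convolutions quantitative, and to find an integrable majorant in $\xi$ that is uniform in $L$. The interplay between the pointwise unimodal decay and the integrability condition in Assumption \ref{Ass.AdmissiblePhi} is essential here, and the argument is structurally close to the bulk/tail split already used in the proof of Theorem \ref{Prop:SolitaryCompactness}.
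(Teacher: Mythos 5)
Your proposal follows essentially the same route as the paper: zero-extension of the periodic maximizers to obtain admissible competitors for the solitary problem (giving the $\limsup$ bound and, afterwards, a maximizing sequence to which Theorem \ref{Prop:SolitaryCompactness} applies), and rescaled periodic continuation of a solitary maximizer to obtain the $\liminf$ bound, with the wrap-around error controlled by the compactly supported convolution kernel, the unimodal decay estimate \eqref{Eqn:UnimodalDecay}, and the $\xi$-integrability from Assumption \ref{Ass.AdmissiblePhi}. The only differences are cosmetic (truncating at $L/2$ instead of $L$, and spelling out the bulk/strip/tail split that the paper compresses into ``we readily show''), so the argument is correct and matches the paper's proof.
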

\begin{proof}
\emph{\ul{Convergence of maxima}}: For finite $L$, we denote by
\begin{align*}
\widetilde{W}_L\at{x}:=W_L\at{x}\quad \text{for}\quad  \abs{x}< L\,,\qquad \widetilde{W}_L\at{x}:=0\quad\text{else}
\end{align*}
the trivial continuation of $W_L$ to a function from $\fspaceL^2\at\Rset$, which satisfies $\widetilde{W}_L\in\calC$ as well as $\calK_\infty\bat{\widetilde{W}_L}=\calK_L\at{W_L}$. Moreover, given a solitary maximizer $W_\infty$ we define for any $0<L<\infty$ the $2L$-periodic function $\ol{W}_L$ to be the scaled periodic continuation of $W_\infty$ restricted to $I_L$. This reads
\begin{align*}
\ol{W}_L\at{x}:=c_L\, W_\infty\at{ x}\quad \text{for}\quad  \abs{x}< L\,,\qquad \ol{W}_L\at{x+2L}=\ol{W}_L\at{x}\quad \text{for}\quad x\in\Rset
\end{align*}
with
\begin{align*}
c_L^{-2}:=
\at{2K}^{-1} \int\limits_{I_L}\babs{W_\infty\at{x}}^2\dint{x}
\end{align*}
and implies $\ol{W}_L\in \calC$  as well as $\calK_L\at{\ol{W}_L}=K$. Since the kernel in the convolution operator $\calA_\xi$ has compact support and since the decay estimate
\begin{align*}
\babs{W_L\at{x}}\leq \sqrt{\frac{K}{\abs{x}}}
\end{align*}
holds by \eqref{Eqn:UnimodalDecay} both for finite and infinite $L$, we readily show
\begin{align*}
\babs{\calP_\infty\bat{\widetilde{W}_L}-\calP_L\at{W_L}}\leq o_L\at{1}\,,
\end{align*}
with $o_L\at{1}\to0$ as $L\to\infty$, and deduce
\begin{align}
\label{Prop:ConvergencePeriodicWaves.P1}
P_\infty\at{K}\geq \limsup_{L\to\infty}\calP_\infty\bat{\widetilde{W}_L}\geq
\limsup_{L\to\infty}\calP_L\bat{W_L}=\limsup_{L\to\infty} P_L\at{K}\,.
\end{align}
Similarly, we find
\begin{align*}
\babs{\calP_L\bat{\ol{W}_L}-\calP_\infty\at{W_\infty}}\leq o_L\at{1}\,
\end{align*}
thanks to $\norm{\ol{W}_L-\chi_{I_L} W_\infty}_{\fspaceL^2\at{I_L}}\to0$, and this implies
\begin{align}
\label{Prop:ConvergencePeriodicWaves.P2}
\liminf_{L\to\infty} P_L\at{K} \geq \liminf_{L\to\infty} \calP_L\bat{\ol{W}_L}=\calP_\infty\at{W_\infty}=P_\infty\at{K}\,.
\end{align}
The claim \eqref{Prop:ConvergencePeriodicWaves.E1} now follows from \eqref{Prop:ConvergencePeriodicWaves.P1} and \eqref{Prop:ConvergencePeriodicWaves.P2}.
\par%
\emph{\ul{Convergence of maximizers}}:
By construction and \eqref{Prop:ConvergencePeriodicWaves.E1} we have $\calP_\infty\bat{\widetilde{W}_L}\to P\at{K}$, so \BMHR Theorem~\ref{Prop:SolitaryCompactness} \EMHC ensures the existence of strongly $\fspaceL^2$-convergent subsequences. In particular, any accumulation point $W_\infty$ belongs to $\calC$ and satisfies $\calK_\infty\at{W_\infty}=K$ \BMHR as well as \EMHR $\calP_\infty\bat{W_\infty}=P\at{K}$.
\end{proof}
\BMHR Proposition~\ref{Prop:ConvergencePeriodicWaves}
implies that the periodic solutions to the optimization problem \eqref{Eqn:OptProblem} come with non-constant profile functions provided that $L$ is sufficiently large and that condition \eqref{Eqn:SQCond} is satisfied. We further \EMHR mention that the techniques from the proof of Proposition \ref{Prop:ConvergencePeriodicWaves} can be used to characterize the continuous-coupling case as \BMHC the \EMHC scaling limit  of the discrete coupling case.
%
%
%
\section{Outlook to asymptotic regimes}
\label{sect:AsympLin}
%
%
%
In this section we briefly discuss some asymptotic properties of traveling waves in peridynamical media. We focus again on the continuous-coupling case \eqref{Case:ContinousCoupling} but the essential arguments can easily be modified to cover  \BMHR the lattice case \eqref{Case:DiscreteCoupling}  as well. \EMHR To ease the notation we suppose
\begin{align*}
\Phi^{\prime\prime}\at{0}=1
\end{align*}
and assume that the smooth and nonnegative coupling coefficients $\al$ and $\beta$  \BMHR sufficiently regular \EMHC for $\xi\to 0$ and $\xi\to\infty$ so that all integrals appearing below are  \BMHR well-defined. \EMHR
%
%
\subsection{Harmonic limit for periodic waves and dispersion relation}\label{sect:Aympt.Harmonic}
%
%
In the harmonic case $\Phi\at{r}=\tfrac12r^2$ we can solve the linear equation \eqref{Eqn:TW1} by Fourier transform. \BMHR More precisely, with \EMHR
\begin{align*}
W_{n,L,K}:=\sqrt{2KL^{-1}}\cos\at{ n L^{-1}\pi x}\,,\qquad \si^2_{n,L}:=\Theta^2\at{n L^{-1}\pi}
\end{align*}
and
\begin{align}
\label{Eqn:DefTheta}
\Theta^2\at{k}:=\int\limits_0^\infty \xi^2\alpha\at{\xi}\beta^2\at\xi \mathrm{sinc}^2\at{k\xi/2}\dint\xi\,.
\end{align}
\BMHR we obtain \EMHR the family of all even and periodic solutions, where the parameters $n\in\Nset$, $K>0$, and $0<L<\infty$ can be chosen arbitrarily. Of course, the  waves for $n>0$ are neither unimodal nor positive and not relevant for the constrained optimization problem from \S\ref{sect:Existence} \BMHR since we have \EMHR
\begin{align*}
\calP_L\bat{W_{n,K,L}}=\calQ_L\bat{W_{n,K,L}}=\Theta^2\at{nL^{-1}\pi}K
\end{align*}
but
\begin{align}
\label{Eqn:LinEnergy}
P_L\at{K}=Q_L\at{K}=K\int\limits_0^\infty\xi^2 a\at\xi\beta^2\at{\xi}\dint\xi=\Theta^2\at{0}K
\end{align}
\BMHR for the harmonic standard potential. \EMHR The dispersion relation for the peridynamical wave equation \eqref{Eqn:DynamicalSystem} is given by
\begin{align}
\label{Eqn:DefOmega}
\om^2=\Om^2\at{k}:=k^2\Theta^2\at{k}
\end{align}
and follows from inserting the ansatz $u\pair{t}{x}=\exp\at{\iu kx-\iu\om t}$ in the linear variant of the dynamical model \eqref{Eqn:DynamicalSystem2}. Elementary asymptotic arguments reveal
\begin{align*}
\Theta^2\at{k}\quad\xrightarrow{k\to0}\quad c_0:=\int\limits_0^\infty \xi^2\al\at\xi\beta^2\at\xi\dint\xi\,,\qquad
\Omega^2\at{k}\quad\xrightarrow{k\to\pm\infty}\quad c_\infty:=2\int\limits_0^\infty \al\at\xi\beta^2\at\xi\dint\xi
\end{align*}
as well as
\begin{align*}
\Theta^2\at{k}\quad\xrightarrow{k\to0}\quad\int\limits_0^\infty \xi^2\al\at\xi\beta^2\at\xi\dint\xi+O\at{k^2},
\end{align*}
provided that the coefficient functions $\al$ and $\beta$ from \eqref{Case:ContinousCoupling} are sufficiently \BMHC  non-singular \EMHC such that $\al\beta^2$ is integrable in $\cointerval{0}{\infty}$. We refer to
Figure \ref{fig:dispersion} for an illustration and mention that the choice in \eqref{Eqn.Silling} is less regular due to $\al\at\xi\beta^2\at{\xi}=\xi^{-1}$ for $\xi\approx0$.
\begin{figure}[ht!]
\centering{%
\includegraphics[width=.75\textwidth]{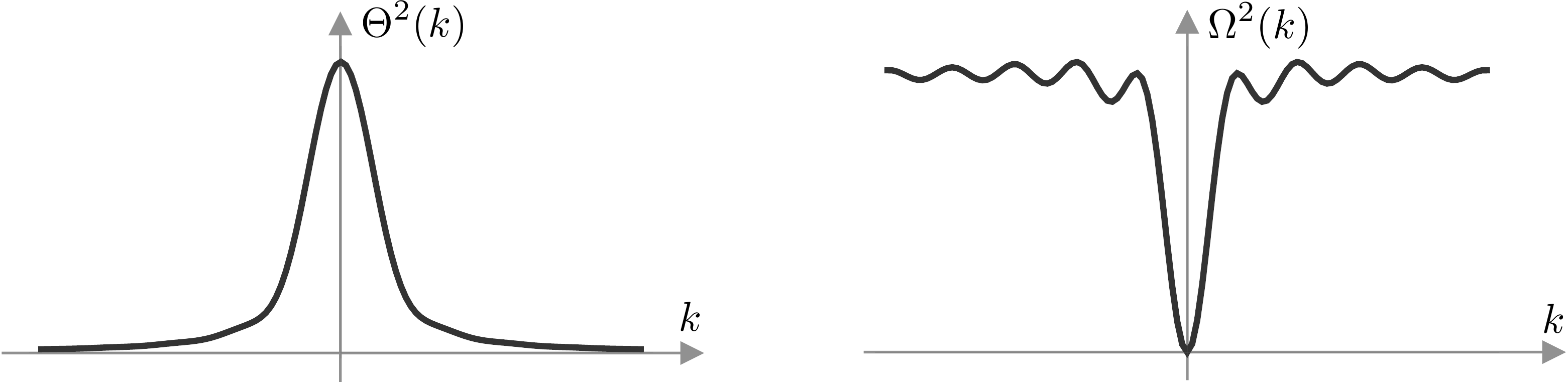}
}%
\caption{Typical example of the functions $\Theta^2$ and $\Omega^2$ from \eqref{Eqn:DefTheta} and \eqref{Eqn:DefOmega} for a peridynamical medium
with regular coefficient functions $\al$ and $\beta$. For FPUT chains,
$\Theta^2\at{k}=\mathrm{sinc}^2\at{k/2}$ has infinitely many zeros while
$\Om^2\at{k}=4\sin^2\at{k/2}$ is periodic.%
}%
\label{fig:dispersion}
\end{figure}
\par
The simplest linear PDE model that comprises the same asymptotic properties in its dispersion relation is
\begin{align*}
c_0\partial_t^2 \partial_x^2 u\pair{t}{x}-c_\infty\partial_t^2u\pair{t}{x}+c_0c_\infty \partial_x^2u\pair{t}{x}=0\,.
\end{align*}
\BMHR This equation is well-posed as \EMHR it can be written as the Banach-valued ODE
\begin{align*}
\partial_t^2u\pair{t}{x}=\calD u\pair{t}{x}\,,\qquad \calD=\at{c_0^{-1}-c_{\infty}^{-2}\partial_x^2}^{-1}\partial_x^2\,,
\end{align*}
where the bounded pseudo-differential operator $\calD$ has the symbol function
\begin{align*}
\widehat{\calD}\at{k}=\frac{c_0c_\infty k^2}{c_\infty+c_0k^2}\,.
\end{align*}
A nonlinear analogue would be \BMHR the PDE \EMHR
\begin{align}
\label{Eqn:PDESubstitute}
\partial_t^2u\pair{t}{x}=\at{c_0^{-1}-c_{\infty}^{-2}\partial_x^2}^{-1}\partial_x\Psi^\prime\bat{\partial_xu\pair{t}{x}}\,,
\end{align}
\BMHR which can be viewed as a variant of the regularized Boussinesq equation, see for instance \cite{PBSO13}. The \EMHR corresponding traveling wave ODE
\begin{align*}
-c_0W^{\prime\prime}=\si^{-2}c_0c_\infty\Psi^\prime\at{W}-c_\infty W
\end{align*}
is of Hamiltonian type and admits homoclinic solutions provided that $\Psi$ grows super-linearly and that $\si$ is sufficiently large. It \BMHC is \EMHC desirable to explore the similarities and differences between the peridynamical model \eqref{Eqn:DynamicalSystem} and the PDE substitute \eqref{Eqn:PDESubstitute} in greater detail.
\par
We finally \BMHC observe \EMHC that \BMHR the \EMHR complexified version of  $\Theta^2$ \BMHR predicts \EMHR the spatial decay of unimodal solitary waves with non-harmonic $\Phi$. In fact, the exponential ansatz
\begin{align}
\label{Eqn:ExpDecay}
W\at{x}\sim \exp\at{-\la\abs{x}}\quad\text{for}\quad \abs{x}\to\infty
\end{align}
is compatible with \eqref{Eqn:TW1} if and only if the rate parameter $\la$ satisfies the transcendental equation
\begin{align*}
\si^2=\int\limits_0^\infty \xi^2\alpha\at{\xi}\beta^2\at\xi \,\mathrm{sinch}^2\at{\xi \la/2}\dint\xi=\Theta^2\at{\iu\la}\,.
\end{align*}
This equation admits a unique positive and real solution for $\si^2>\Theta^2\at{0}$, and all solitary waves from \S\ref{sect:Existence} meet this condition due to the lower bound for the wave speed \BMHC in \EMHC Corollary \ref{Cor:MaximizerAreWaves} and since Proposition \ref{Prop:ConvergencePeriodicWaves}  combined with \eqref{Eqn:LinEnergy} implies $K^{-1}P_\infty\at{K}>K^{-1}Q_\infty\at{K}=\Theta^2\at{0}$.

%
\subsection{Korteweg-deVries limit for solitary waves}
%
%
%
It is \BMHC well-known \EMHC that the Korteweg-deVries (KdV) equation is naturally related to many nonlinear and spatially extended Hamiltonian systems as it governs the effective dynamics in the \BMHC large \EMHC  wave-length regime, in which traveling waves propagate with near sonic speed, have small amplitudes, and carry small energies. For solitary waves in  FPUT chains, this was first observed in \cite{ZK65} and has later been proven rigorously in \cite{FP99}. We also refer to \cite{FM14} for periodic KdV waves, to \cite{ChH18} for two-dimensional lattices, and to \cite{GMWZ14,HW17} for the more complicated case of polyatomic chains. Related results on initial value problems can be found in \cite{Sw00,HW08,HW09}.
\par
The KdV equation also governs the near sonic limit of nonlocal lattices and the corresponding existence problem for solitary lattice wave has been investigated rigorously in \cite{HML15}. In what follows we  discuss how the underlying ideas and asymptotic techniques can be applied in the peridynamical setting \eqref{Case:ContinousCoupling} provided that
\begin{align}
\notag
\eta:=\tfrac12\Phi^{\prime\prime\prime}\at{0}>0\,.
\end{align}
The starting point for the construction of KdV waves is the ansatz
\begin{align}
\label{Eqn:KdV5}
W\at{x}=\eps^2 \ol{W}\bat{\eps x}\,,\qquad \si^2=\Theta^2\at{0}+\eps^2
\end{align}
with sound speed $\Theta\at{0}$ from \eqref{Eqn:DefTheta}, natural scaling parameter $\eps>0$, and rescaled space variable  $\ol{x}:=\eps{x}$. Inserting this ansatz into \eqref{Eqn:TW1}, using the integral identity
\begin{align*}
\bat{\calA_\xi W}\at{x}=\eps \int\limits_{\ol{x}-\eps\xi/2}^{\ol{x}+\eps\xi/2}\ol{W}\bat{\ol{y}}\dint\ol{y}=\bat{\eps\calA_{\eps\xi}\ol{W}}\at{\ol{x}}\,,
\end{align*}
and dividing by $\eps^4$ we obtain a transformed integral equation for $\ol{W}$. This equations can be written as
\begin{align}
\label{Eqn:KdV1}
\calB_\eps V = \calN_\eps\at{V}+\eps^2 \calG_\eps\at{V}\,,
\end{align}
where the  operators
\begin{align*}
\calB_\eps \ol{W}:=\ol{W}+\eps^{-2}\Theta^2\at{0}\ol{W}-\eps^{-4}\int\limits_0^{\infty}\al\at\xi\beta^2\at\xi\calA_{\eps\xi}^2\ol{W}\dint\xi
\end{align*}
and
\begin{align*}
\calN_\eps\at{\ol{W}}:=\eps^{-5}\eta\int\limits_0^{\infty}\al\at\xi\beta^3\at\xi\calA_{\eps\xi}\bat{\eps\calA_{\eps\xi}\ol{W}}^2\dint\xi
\end{align*}
collect all terms that are linear and quadratic, respectively, with respect to $\ol{W}$, while
\begin{align*}
\calG_\eps\at{\ol{W}}:=\eps^{-7}\int\limits_0^{\infty}\al\at\xi\beta\at\xi\calA_{\eps\xi}G\bat{\eps\beta\at\xi\calA_{\eps\xi}\ol{W}}\dint\xi
\end{align*}
with $G\at{r}:=\Phi^\prime\at{r}-r-\eta r^2=O\at{r^3}$ stems from the cubic and the higher order terms and does not contribute to the KdV limit.
\par%
The key asymptotic observations for the limit $\eps\to0$ is the formal asymptotic expansion
\begin{align}
\label{Eqn:KdV2}
\calA_\delta U =\delta U+\tfrac{1}{24}\delta^3U^{\prime\prime}+\tdots\,,
\end{align}
which provides in combination with the above formulas the formal limit equation
\begin{align*}
\ol{W}-c_1\ol{W}^{\prime\prime}=c_2 \ol{W}^2
\end{align*}
with positive coefficients
\begin{align}
\label{Eqn:KdV7}
c_1 := \tfrac12\int\limits_0^\infty\xi^4\al\at\xi\beta^2\at\xi\dint\xi>0\,,\qquad c_2:= \eta\int\limits_0^\infty\xi^3\al\at\xi\beta^3\at\xi\dint\xi>0\,.
\end{align}
This planar Hamiltonian ODE is precisely the traveling wave equation of the KdV equation and admits the homoclinic and even solution
\begin{align}
\label{Eqn:KdV8}
\ol{W}_{\text{KdV}}\at{\ol{x}}:=\frac{3}{2c_2}\mathrm{sech}^2\at{\frac{\ol{x}}{2\sqrt{c_1}}}\,,
\end{align}
which is moreover unique, unimodal, positive, and exponentially decaying. However, the  asymptotic expansion in \eqref{Eqn:KdV2} is not regular but singular because the error terms involve higher derivatives. The rigorous justification of the limit problem \eqref{Eqn:KdV7} is therefore not trivial and necessitates the use of refined asymptotic techniques. Following \cite{FP99}, we use the corrector ansatz
\begin{align}
\label{Eqn:KdV6}
\ol{W}=\ol{W}_{\text{KdV}}+\eps^2\ol{V}\,,
\end{align}
which transforms \eqref{Eqn:KdV1} into
\begin{align*}
\calB_\eps \ol{V}-\calM_\eps \ol{V}:=\calF_\eps\at{\ol{V}}\,,
\end{align*}
where the linear operator
\begin{align*}
\calM_\eps \ol{W}=
\eps^{-5}2\eta\int\limits_0^{\infty}\al\at\xi\beta^3\at\xi\calA_{\eps\xi}\Bat{\bat{\eps\calA_{\eps\xi}\ol{W}_{\text{KdV}}}\bat{\eps\calA_{\eps\xi}\ol{V}}}\dint\xi
\end{align*}
represents the additional linear term that stems from the linearization of $\calN_\eps$ around $\ol{W}_{\text{KdV}}$. Moreover, the nonlinear operator
\begin{align*}
\calF_\eps\at{\ol{V}}=\eps^{-2}\Bat{\calB_\eps \ol{W}_{\text{KdV}}-\calQ_\eps\at{\ol{W}_{\text{KdV}}}}+\eps^2\calQ_\eps\at{\ol{V}}+\calG_\eps\bat{\ol{W}_{\text{KdV}}+\eps^2\ol{V}}
\end{align*}
is composed of small nonlinear terms in $\ol{V}$ as well as residual terms depending on $\eps$ and $\ol{W}_{\text{KdV}}$ only.
\par
\begin{figure}[ht!]
\centering{%
\includegraphics[width=.75\textwidth]{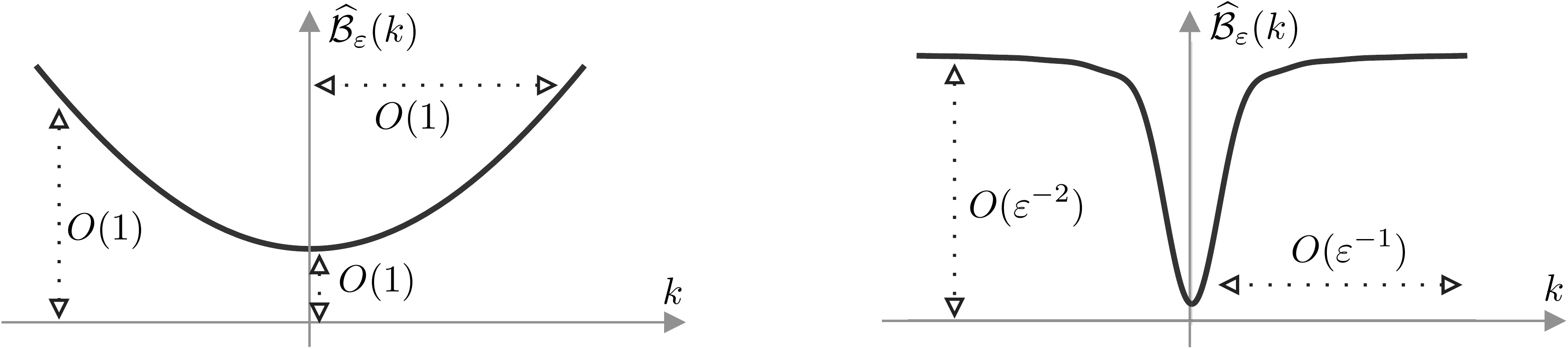}
}%
\caption{Cartoon of the symbol function from \eqref{Eqn:KdVS}, plotted on a normal  (left) and a large (right) scale.%
}%
\label{fig:kdv}
\end{figure}
The rigorous justification of the KdV limit can \BMHR be \EMHR achieved along the lines of \cite{HML15} and hinges on the following crucial ingredients:
\begin{enumerate}
\item
The pseudo-differential operator $\calB_\eps$ is uniformly invertible and its inverse is almost compact in the sense that it can be written as the sum of a compact operator and a small bounded one. These statements follow from an asymptotic analysis of the corresponding symbol function
\begin{align}
\label{Eqn:KdVS}
\widehat{\calB}_\eps \at{k}:=1+\int\limits_0^{\infty}\xi^2\al\at\xi\beta^2\at\xi\bat{1-\eps^{-2}\mathrm{sinc}^2\bat{\eps \xi k/2}}\dint\xi\,,
\end{align}
see Figure \ref{fig:kdv} for an illustration.
\item
The symmetric operator $\calL_\eps:=\calB_\eps-\calM_\eps$ is uniformly invertible on  $\fspaceL^2_{\text{even}}\at\Rset$ as it
satisfies
\begin{align*}
\norm{\calL_\eps \ol{V}}_2\geq c \norm{\ol{V}}_2
\end{align*}
on that space, where the constant $c>0$ can be chosen \BMHC independently \EMHC of $\eps$. This can be proven by contradiction using the almost compactness of $\calB_\eps^{-1}$ and the fact, that the limiting operator
\begin{align*}
\calL_0\ol{V}=\ol{V}-c_1\ol{V}^{\prime\prime}-2c_2 \ol{W}_{\text{KdV}}\ol{V}
\end{align*}
has a one-dimensional nullspace in $\fspaceL^2\at\Rset$ according to the Sturm-Liouville theory, which is spanned by the odd derivative of the KdV profile \eqref{Eqn:KdV8}.
\item
\BMHR For sufficiently small $\eps>0$, the operator $\calL_\eps^{-1}\calF_\eps$ maps  a certain ball in $\fspaceL^2_\even\at\Rset$ contractively into itself so that  there exists a locally unique fixed point. \EMHC
\end{enumerate}
In particular, for any sufficiently small $\eps>0$ there exists via \eqref{Eqn:KdV5} and \eqref{Eqn:KdV6} a near-sonic wave solution to \eqref{Eqn:TW1}. The nonlinear stability for such waves has been shown in the paper series  \cite{FP02,FP04a, FP04b} for FPUT chains, see also \cite{HW13b}. It remains a challenging task to generalize the proofs to the peridynamical case.
%
%
%
\subsection{High-speed limit for super-polynomial potentials}
%
%
%
Another well-known asymptotic regime concerns atomic chains with super-polynomially growing interaction potential and waves which propagate much faster than the sound speed and carry a huge amount of energy. It has been observed in \cite{FM02, Tre04} for FPUT chains with singular Lennard-Jones-types potential that both the distance and the velocity profiles of those high-speed waves converge to simple limit functions which are naturally related to the traveling waves in the hard-sphere model. In other words, the particles interact asymptotically by \BMHC elastic \EMHC collisions and \BMHR traveling waves come with strongly localized profile functions. \EMHC The asymptotic analysis of the high-energy limit of FPUT chains and the underlying advance-delay-differential equation has later been refined by the authors. In \cite{HM15,Her17} they derive accurate and almost explicit approximation formulas for the wave profiles by combining a nonlinear shape ODE with local scaling arguments and natural matching conditions. Moreover, the local uniqueness, smooth parameter dependence, and nonlinear orbital stability of high-speed waves are established in \cite{HM17,HM18} using similar two-scale \BMHR techniques, the \EMHR non-asymptotic part of the Friesecke-Pego theory from \BMHR \cite{FP02,FP04a}, and the enhancement by Mizumachi in \cite{Miz09}. \EMHR
\begin{figure}[t!]%
\centering{%
\includegraphics[width=.9\textwidth]{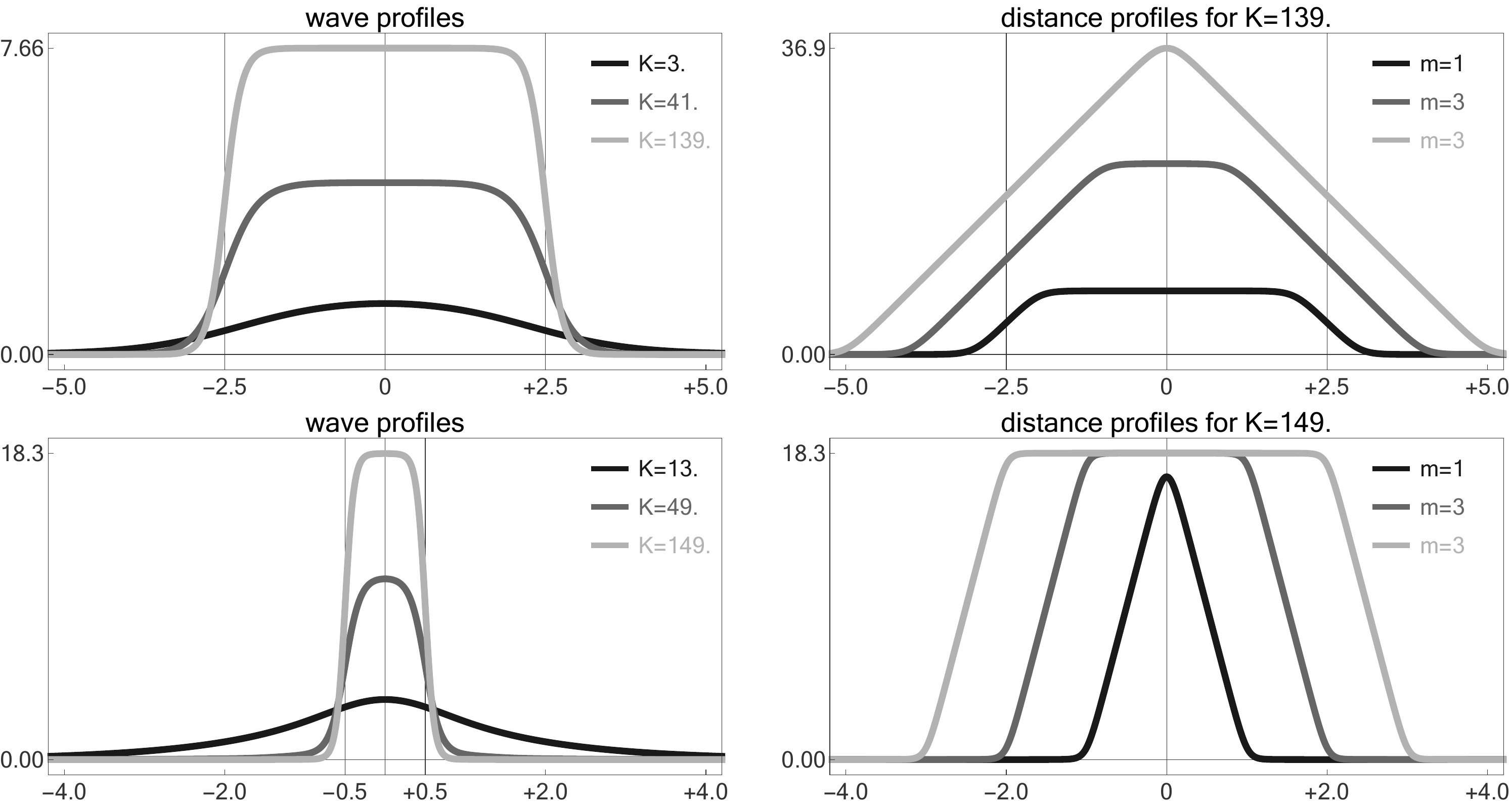}%
}%
\caption{%
\emph{Top row:} Wave profile $W$ for example \eqref{ExLimit} with
$\beta_m=1$ and several values of $K$ (\emph{left}) as well as the corresponding distance profiles $\calA_{m}W$ (right).
\emph{Bottom row:} Similar plots with $\beta_m=m^{-1}$, which is the typical choice for peridynamics.%
}%
\label{fig:limit}%
\end{figure}%
\par
Preliminary computations as well as numerical simulations indicate that the high-energy
waves in peridynamical media can likewise be approximated by simple profile functions.
For two examples with discrete coupling we refer to Figure \ref{fig:limit}, which relies
on
\begin{align}
\label{ExLimit}
M=5\,,\qquad L=10\,,\qquad \Phi_m\at{r}=\cosh\at{\beta_m r}\,,
\end{align}
and illustrates that the wave profile $W$ converges as $K\to\infty$ to the scaled indicator function \BMHR of an interval \EMHR provided that the interaction potentials grow faster than a polynomial. This implies that the corresponding distance profiles are basically piecewise affine but the details depend on the choice of the parameters.
\par
A rigorous asymptotic analysis of this numerical observation lies beyond the scope of this paper but first steps are already made in \cite{Che17} for the nonlocal advance-delay-differential equations that govern solitary waves in two-dimensional FPUT lattices. More  precisely, assuming \BMHC exponentially \EMHC growing potentials and using the multi-scale techniques from \cite{Her17} one can derive a reduced Hamiltonian ODE system that can be expected to describe the local rescaling of the different distance profiles. This ODE, however, has more than two degrees of freedom and a qualitative or quantitative analysis of the relevant solutions is hence not trivial.  It remains a challenging task to characterize the high-energy limit for peridynamical media or chains with more than nearest-neighbor interactions, and to justify the numerical observations \BMHC in \EMHC Figure \ref{fig:limit}.
%
%
%
%
\section*{Acknowledgements}
Some aspects of our work build upon analytical or numerical investigations in the theses of Benedikt Hewer and Fanzhi Chen, see \cite{Hew13,Che13}. We also acknowledge the financial support by the \emph{Deutsche Forschungsgemeinschaft} (DFG individual grant HE 6853/2-1).
%

\begin{thebibliography}{GMWZ14}

\bibitem[CH18]{ChH18}
F.~Chen and M.~Herrmann.
\newblock {KdV}-like solitary waves in two-dimensional {FPU}-lattices.
\newblock {\em Discrete Contin. Dyn. Syst. Ser. A}, 38(5):2305--2332, 2018.

\bibitem[Che13]{Che13}
F.~Chen.
\newblock Wandernde {W}ellen in {FPU}-{G}ittern.
\newblock Masters thesis, in German, Institute for Mathematics, Saarland
  University, Germany, 2013.

\bibitem[Che17]{Che17}
F.~Chen.
\newblock Traveling waves in two-dimensional {FPU} lattices.
\newblock PhD thesis, Institute for Applied Mathematics, University of
  M\"unster, Germany, 2017.

\bibitem[DB06]{DB06}
K.~Dayal and K.~Bhattacharya.
\newblock Kinetics of phase transformations in the peridynamic formulation of
  continuum mechanics.
\newblock {\em J. Mech. Phys. Solids}, 54(9):1811--1842, 2006.

\bibitem[EP05]{EP05}
J.M. English and R.L. Pego.
\newblock On the solitary wave pulse in a chain of beads.
\newblock {\em Proc. Amer. Math. Soc.}, 133(6):1763--1768 (electronic), 2005.

\bibitem[FM02]{FM02}
G.~Friesecke and K.~Matthies.
\newblock Atomic-scale localization of high-energy solitary waves on lattices.
\newblock {\em Phys. D}, 171(4):211--220, 2002.

\bibitem[FML15]{FM14}
G.~Friesecke and A.~Mikikits-Leitner.
\newblock Cnoidal waves on {F}ermi-{P}asta-{U}lam lattices.
\newblock {\em J. Dynam. Differential Equations}, 27(3-4):627--652, 2015.

\bibitem[FP99]{FP99}
G.~Friesecke and R.L. Pego.
\newblock Solitary waves on {FPU} lattices. {I}. {Q}ualitative properties,
  renormalization and continuum limit.
\newblock {\em Nonlinearity}, 12(6):1601--1627, 1999.

\bibitem[FP02]{FP02}
G.~Friesecke and R.L. Pego.
\newblock Solitary waves on {FPU} lattices. {II}. {L}inear implies nonlinear
  stability.
\newblock {\em Nonlinearity}, 15(4):1343--1359, 2002.

\bibitem[FP04a]{FP04a}
G.~Friesecke and R.L. Pego.
\newblock Solitary waves on {F}ermi-{P}asta-{U}lam lattices. {III}.
  {H}owland-type {F}loquet theory.
\newblock {\em Nonlinearity}, 17(1):207--227, 2004.

\bibitem[FP04b]{FP04b}
G.~Friesecke and R.L. Pego.
\newblock Solitary waves on {F}ermi-{P}asta-{U}lam lattices. {IV}. {P}roof of
  stability at low energy.
\newblock {\em Nonlinearity}, 17(1):229--251, 2004.

\bibitem[FV99]{FV99}
A.-M. Filip and S.~Venakides.
\newblock Existence and modulation of traveling waves in particle chains.
\newblock {\em Comm. Pure Appl. Math.}, 51(6):693--735, 1999.

\bibitem[FW94]{FW94}
G.~Friesecke and J.A.D. Wattis.
\newblock Existence theorem for solitary waves on lattices.
\newblock {\em Comm. Math. Phys.}, 161(2):391--418, 1994.

\bibitem[GMWZ14]{GMWZ14}
J.~Gaison, S.~Moskow, J.D. Wright, and Q.~Zhang.
\newblock Approximation of polyatomic {FPU} lattices by {K}d{V} equations.
\newblock {\em Multiscale Model. Simul.}, 12(3):953--995, 2014.

\bibitem[Her10]{Her10}
M.~Herrmann.
\newblock Unimodal wavetrains and solitons in convex {F}ermi-{P}asta-{U}lam
  chains.
\newblock {\em Proc. Roy. Soc. Edinburgh Sect. A}, 140(4):753--785, 2010.

\bibitem[Her11]{Her11}
M.~Herrmann.
\newblock Action minimizing fronts in general {FPU}-type chains.
\newblock {\em J. Nonlinear Sci.}, 21(1):33--55, 2011.

\bibitem[Her17]{Her17}
M.~Herrmann.
\newblock High-energy waves in superpolynomial {FPU}-type chains.
\newblock {\em J. Nonlinear Sci.}, 27(1):213--240, 2017.

\bibitem[Hew13]{Hew13}
B.~Hewer.
\newblock {N}ichtlineare {W}ellen in nicht-lokalen atomaren {K}etten.
\newblock Bachelors thesis, in German, Institute for Mathematics, Saarland
  University, Germany, 2013.

\bibitem[HM15]{HM15}
M.~Herrmann and K.~Matthies.
\newblock Asymptotic formulas for solitary waves in the high-energy limit of
  {FPU}-type chains.
\newblock {\em Nonlinearity}, 28(8):2767--2789, 2015.

\bibitem[HM17a]{HM18}
M.~Herrmann and K.~Matthies.
\newblock Stability of high-energy solitary waves in
  {F}ermi-{P}asta-{U}lam-{T}singou chains.
\newblock arXiv preprint no. 1709.00948, 2017.

\bibitem[HM17b]{HM17}
M.~Herrmann and K.~Matthies.
\newblock Uniqueness of solitary waves in the high-energy limit of {FPU}-type
  chains.
\newblock In P.~Gurevich, J.~Hell, B.~Sandstede, and A.~Scheel, editors, {\em
  Patterns of Dynamics}, pages 3--15. Springer, Cham, 2017.
\newblock conference proceedings PaDy 2016.

\bibitem[HML16]{HML15}
M.~Herrmann and A.~Mikikits-Leitner.
\newblock Kd{V} waves in atomic chains with nonlocal interactions.
\newblock {\em Discrete Contin. Dyn. Syst.}, 36(4):2047--2067, 2016.

\bibitem[HMSZ13]{HMSZ13}
M.~Herrmann, K.~Matthies, H.~Schwetlick, and J.~Zimmer.
\newblock Subsonic phase transition waves in bistable lattice models with small
  spinodal region.
\newblock {\em SIAM J. Math. Anal.}, 45(5):2625--2645, 2013.

\bibitem[HR10]{HR10}
M.~Herrmann and J.D.M. Rademacher.
\newblock Heteroclinic travelling waves in convex {FPU}-type chains.
\newblock {\em SIAM J. Math. Anal.}, 42(4):1483--1504, 2010.

\bibitem[HW08]{HW08}
A.~Hoffman and C.E. Wayne.
\newblock Counter-propagating two-soliton solutions in the
  {F}ermi-{P}asta-{U}lam lattice.
\newblock {\em Nonlinearity}, 21(12):2911--2947, 2008.

\bibitem[HW09]{HW09}
A.~Hoffman and C.E. Wayne.
\newblock Asymptotic two-soliton solutions in the {F}ermi-{P}asta-{U}lam model.
\newblock {\em J. Dynam. Differential Equations}, 21(2):343--351, 2009.

\bibitem[HW13]{HW13b}
A.~Hoffman and C.E. Wayne.
\newblock A simple proof of the stability of solitary waves in the
  {F}ermi-{P}asta-{U}lam model near the {K}d{V} limit.
\newblock In {\em Infinite dimensional dynamical systems}, volume~64 of {\em
  Fields Inst. Commun.}, pages 185--192. Springer, New York, 2013.

\bibitem[HW17]{HW17}
A.~Hoffman and J.D. Wright.
\newblock Nanopteron solutions of diatomic {F}ermi-{P}asta-{U}lam-{T}singou
  lattices with small mass-ratio.
\newblock {\em Phys. D}, 358:33--59, 2017.

\bibitem[IJ05]{IJ05}
G.~Iooss and G.~James.
\newblock Localized waves in nonlinear oscillator chains.
\newblock {\em \mbox{Chaos}}, 15:015113, 2005.

\bibitem[IK00]{IK00}
G.~Iooss and K.~Kirchg\"assner.
\newblock Travelling waves in a chain of coupled nonlinear oscillators.
\newblock {\em Comm. Math. Phys.}, 211(2):439--464, 2000.

\bibitem[Jam12]{Jam12}
G.~James.
\newblock Periodic travelling waves and compactons in granular chains.
\newblock {\em J. Nonlinear Sci.}, 22(5):813--848, 2012.

\bibitem[JP14]{JP14}
G.~James and D.E. Pelinovsky.
\newblock Gaussian solitary waves and compactons in {F}ermi-{P}asta-{U}lam
  lattices with {H}ertzian potentials.
\newblock {\em Proc. R. Soc. Lond. Ser. A Math. Phys. Eng. Sci.},
  470(2165):20130462, 20, 2014.

\bibitem[Miz09]{Miz09}
T.~Mizumachi.
\newblock Asymptotic stability of lattice solitons in the energy space.
\newblock {\em Comm. Math. Phys.}, 288(1):125--144, 2009.

\bibitem[Pan05]{Pan05}
A.~Pankov.
\newblock {\em Traveling {W}aves and {P}eriodic {O}scillations in
  {F}ermi-{P}asta-{U}lam {L}attices}.
\newblock Imperial College Press, 2005.

\bibitem[PBSO13]{PBSO13}
J.A. Pava, C.~Banquet, J.D. Silva, and F.~Oliveira.
\newblock The regularized {B}oussinesq equation: instability of periodic
  traveling waves.
\newblock {\em J. Differential Equations}, 254(9):3994--4023, 2013.

\bibitem[PS04]{PS04}
D.E. Pelinovsky and Y.A. Stepanyants.
\newblock Convergence of {P}etviashvili's iteration method for numerical
  approximation of stationary solutions of nonlinear wave equations.
\newblock {\em SIAM J. Numer. Anal.}, 42(3):1110--1127, 2004.

\bibitem[PV18]{PV18}
R.L. Pego and T.S. Van.
\newblock Existence of solitary waves in one dimensional peridynamics.
\newblock arXiv preprint no. arXiv:1802.00516, 2018.

\bibitem[Sil00]{Sil00}
S.A. Silling.
\newblock Reformulation of elasticity theory for discontinuities and long-range
  forces.
\newblock {\em J. Mech. Phys. Solids}, 48(1):175--209, 2000.

\bibitem[Sil16]{Sil16}
S.A. Silling.
\newblock Solitary waves in a peridynamic elastic solid.
\newblock {\em J. Mech. Phys. Solids}, 96:121--132, 2016.

\bibitem[SK12]{KS12}
A.~Stefanov and P.~Kevrekidis.
\newblock On the existence of solitary traveling waves for generalized
  {H}ertzian chains.
\newblock {\em J. Nonlinear Sci.}, 22(3):327--349, 2012.

\bibitem[SL10]{SL10}
S.A. Silling and L.R. Lehoucq.
\newblock Peridynamic theory of solid mechanics.
\newblock {\em Adv. in Appl. Math.}, 44:73--168, 2010.

\bibitem[SV18]{SV18}
Y.~Starosvetsky and A.~Vainchtein.
\newblock Solitary waves in {FPU} lattices with alternating bond potentials.
\newblock to appear in Mechanics Research Communications, 2018.

\bibitem[SW00]{Sw00}
G.~Schneider and C.E. Wayne.
\newblock Counter-propagating waves on fluid surfaces and the continuum limit
  of the {F}ermi-{P}asta-{U}lam model.
\newblock In {\em International {C}onference on {D}ifferential {E}quations,
  {V}ol. 1, 2 ({B}erlin, 1999)}, pages 390--404. World Sci. Publ., River Edge,
  NJ, 2000.

\bibitem[SZ12]{SZ12}
H.~Schwetlick and J.~Zimmer.
\newblock Kinetic relations for a lattice model of phase transitions.
\newblock {\em Arch. Rational Mech. Anal.}, 206:707--724, 2012.

\bibitem[Tre04]{Tre04}
D.~Treschev.
\newblock Travelling waves in {FPU} lattices.
\newblock {\em Discrete Contin. Dyn. Syst.}, 11(4):867--880, 2004.

\bibitem[TV05]{TV05}
L.~Truskinovsky and A.~Vainchtein.
\newblock Kinetics of martensitic phase transitions: lattice model.
\newblock {\em SIAM J. Appl. Math.}, 66:533--553, 2005.

\bibitem[TV14]{TV14}
L.~Truskinovsky and A.~Vainchtein.
\newblock Solitary waves in a nonintegrable {F}ermi-{P}asta-{U}lam chain.
\newblock {\em Phys. Rev. E}, 90:042903:1--8, 2014.

\bibitem[Wei99]{Wei99}
M.I. Weinstein.
\newblock Excitation thresholds for nonlinear localized modes on lattices.
\newblock {\em Nonlinearity}, 12(3):673--691, 1999.

\bibitem[ZK65]{ZK65}
N.J. Zabusky and M.D. Kruskal.
\newblock Interaction of `solitons' in a collisionless plasma and the
  recurrence of initial states.
\newblock {\em Phys. Rev. Lett.}, 15:240--243, 1965.

\end{thebibliography}
%

%
%
\end{document}